\documentclass[a4paper,11pt,reqno]{amsart}

\usepackage{amsthm, mathrsfs,amssymb,amsmath,}
\usepackage{enumerate}
\usepackage{mathtools}
\usepackage{cite}
\makeatletter
\@namedef{subjclassname@2010}{%
	\textup{2010} Mathematics Subject Classification}
\makeatother

\frenchspacing

\setlength{\textwidth}{15.2cm}
\setlength{\textheight}{22.7cm}
\setlength{\topmargin}{0mm}
\setlength{\oddsidemargin}{3mm}
\setlength{\evensidemargin}{3mm}
\setlength{\footskip}{1cm}
\usepackage{hyperref}
\allowdisplaybreaks
\numberwithin{equation}{section}

\usepackage{graphicx}
\usepackage{enumitem}

\newtheorem{theorem}{Theorem}[section]
\newtheorem{lemma}[theorem]{Lemma}

\newtheorem{Cor}[theorem]{Corollary}
\theoremstyle{definition}

\theoremstyle{remark}
\newtheorem{rem}{Remark}

\newenvironment{pff}{\hspace*{-\parindent}{\bf Proof \,}}
{\hfill $\Box$ \vspace*{0.2cm}}

\DeclareMathOperator*{\esssup}{ess\,sup}

\def\P{{\mathcal{P_{\textrm{aff}}}}}
\def\v{{\vartheta}}
\def\Z{{\mathbb Z}}

\def\R2{{\mathbb R}^{2}}

\def\F{{\mathcal F}}

\def\Z{{\mathbb Z}}

\def\R2n{{\mathbb R}^{2n}}
\def\R{{\mathbb R}}

\def\F{{\mathcal F}}

\title[Pseudo-Differential Operators, Wigner Transform and Weyl Transform on the Affine Poincar\'e Group]{Pseudo-Differential Operators, Wigner Transform and Weyl Transform on the Affine Poincar\'e Group}
\author{Aparajita Dasgupta}
\address{Aparajita Dasgupta \endgraf Department of Mathematics \endgraf Indian Institute of Technology Delhi \endgraf New Delhi - 110 016, India.}
\email{adasgupta@maths.iitd.ac.in}

\author[Santosh Kumar Nayak]{Santosh Kumar Nayak}
\address{
	Santosh Kumar Nayak:
	\endgraf
	Department of Mathematics
	\endgraf
	Indian Institute of Technology, Delhi, Hauz Khas
	\endgraf
	New Delhi-110016 
	\endgraf
	India}
\endgraf
\email{nayaksantosh212@gmail.com }

\usepackage{fancyhdr}
\pagestyle{fancy}
\lhead{}
\rhead{Pseudo-differential Operators on the Affine Poincar\'e Group}
\begin{document}
	
	\thanks{The research of A. Dasgupta was supported by MATRICS Grant(RP03933G), Science and Engineering Research Board (SERB), DST, India and Santosh Kumar Nayak was supported by IIT-D Institute fellowship.
	}
	\thanks{Santosh Kumar Nayak gives special thanks to Shyam Swarup Mondal for his support during the preparation of paper.}
	\date{\today}
	\subjclass{Primary 47G10, 47G30, Secondary 42C40}
	\keywords{Affine Poinc\'{a}re group, Unitary representations, Pseudo-differential operator, Duflo-Moore operator, Hilbert--Schmidt operator, Trace-class operator.}
	\maketitle
	\begin{abstract}
		In this paper, we study harmonic analysis on the affine Poincar\'e group $\P$, which is a non-unimodular group, and obtain pseudo-differential operators with
	operator valued symbols. More precisely, we study the boundedness properties of pseudo-differential operators on $\P$. 
We  also provide a necessary and sufficient condition on the operator-valued symbols such that the corresponding pseudo-differential operators are in the class of Hilbert--Schmidt operators. Consequently, we obtain a characterization of  the trace class pseudo-differential operators on the Poincar\'e affine group $\P$,  and provide a trace formula for these trace class operators.  Finally, we study the Wigner transform, and Weyl transform associated with the operator valued symbol on the Poincar\'e affine group $\P$.

	\end{abstract}

	\section{Introduction}
	One of the most active branches in modern contemporary mathematics is the theory of pseudo-differential operators. These are the generalizations of partial differential operators, which originated as an essential tool in the study of partial differential equations, numerical analysis, and quantum theory. 
	 Let $\sigma:\R^n\times\widehat{\R^n}\to \mathbb{C}$ be a measurable function, where $\widehat{\R^n}$ is dual of $\R^n,$ the set of all inequivalent irreducible unitary representations. Then the global pseudo-differential operator $T_{\sigma}$ associated with $\sigma$ is defined by 
	\begin{equation}\label{pdo}
			(T_{\sigma}f)(x)=(2\pi)^{-n/2}\int_{\R^n}e^{ix\cdot\xi}\sigma(x,\xi)\widehat{f}(\xi)d\xi,\quad x\in\R^n,
	\end{equation}
 for all $f\in S(\R^n)$, the Schwartz space on $\R^n$, provided that the integral exists, where $\widehat{f}$ is the Euclidean Fourier transform of $f$. The basic tool to define a pseudo-differential operator is the Fourier inversion formula, which is defined by
	$$f(x)=(2\pi)^{-n/2}\int_{\R^2}e^{ix\cdot\xi}\widehat{f}(\xi)d\xi,\quad x\in\R^n,$$ in which we insert a suitable symbol $\sigma$ on the phase space $\mathbb{R}^n\times
	\widehat{\mathbb{R}^n}$  in the Fourier inversion formula to get \eqref{pdo}. These ideas have been implemented to construct the pseudo-differential operators on different groups, such as $\mathbb{Z},\mathbb{S}^1$, locally compact abelian groups, compact Lie groups, Heisenberg groups, type I non-unimodular locally compact groups, type I unimodular locally compact groups, affine group, polar affine group, Heisenberg motion group by several authors,\cite{Car1,Car2,Car3,Daku, Del1, Del2,Del3,Del4,Ku1,Ku2,Ku3,Ku4,mantoiu2,mantoiu1,dasgupta,San,San1,Shahla,Ruzha,wong1,wong2}. 
	
 The theory of pseudo-differential operators was initiated by Kohn and Nirenberg \cite{Kohn} on $\R^n$. After that H\"ormander \cite{Hor} carried out all properties of boundedness and compactness of these operators. 
	 
For unimodular group $G$, the Fourier transform of $f\in L^1(G)$ can be defined by 
	 \begin{equation}\label{F}
	 	\widehat{f}(\xi)=\int_{G}f(x)\xi(x)^{\ast}dx, \quad\xi\in\widehat{G},
	 \end{equation} where $\widehat{G}$ is the unitary dual of G, the set of all inequivalent classes of irreducible unitary representations. When $G$ is non-unimodular, the situation becomes more complicated, i.e., if we consider \eqref{F} as Fourier transform in this case, the Fourier inversion and  Plancherel formula would be disturbed. To eradicate these complications, we need to introduce an operator in \eqref{F} called the Duflo-Moore operator \cite{duflo}, which is a self-adjoint, positive unbounded operator. Considerable attention has been devoted to the study of pseudo-differential operators on the non-unimodular groups. For example, see \cite{dasgupta,San} for pseudo-differential operators on affine group, \cite{Shahla} for Poincar\'e unit disk, \cite{San1} for similitude group, and in general \cite{mantoiu1,mantoiu2} for non-unimodular type I group.
 
 The well-known results in the operator theory are the boundedness and compactness of linear operators. If $\sigma$ in $L^2(\R^n\times\widehat{\R^n})$, then the pseudo-differential operator $T_{\sigma}$ in \eqref{pdo} is a bounded linear operator from $L^2(\R^n)$ into $L^2(\R^n)$. In particular, the resulting operator $T_{\sigma}$ is in Hilbert-Schmidt class as explained in \cite{wong2}. Similar kinds of results were proved in \cite{dasgupta1} for pseudo-differential operators on the Heisenberg group, and for the  $H$-type group in \cite{Yin}. Recently, a necessary and sufficient condition on the symbols such that the corresponding  pseudo-differential operators on the affine group and similitude group (polar affine group) are in Hilbert-Schmidt class has been obtained in \cite{San,San1}. Though the similitude group is the natural generalization of the affine group, any group of the type $G=\mathbb{R}^2\rtimes H,$ where $H$ is a group consisting of $2\times 2$ matrices is also worth studying. This group $H$ which is acting of $\mathbb{R}^2$ generates an open free orbit. To be more specific, if the group $H$ is such that for some fixed $2-$vector $\vec{x},$ the set,
 $$\mathcal{O}_x=\{\vec{y}=h^{T}\vec{x}: h\in H\},$$ is an open set in $\mathbb{R}^2$ and for all $\vec{x}\neq 0,$ $h\vec{x}=\vec{x}$ if and only if $h$ is the identity matrix, then such a group $G$ has square interable representations and hence can be used to study wavelets. An example of such group is the affine Poinc\'{a}re group and it is a semidirect product of above type. It differs from the $SIM(2)$ group in that the spatial rotations $r_{\theta}$ are replaced by hyperbolic rotations. The underlying manifold is $2$-dimensional, and this group arises in the study of $2$-dimensional continuous wavelet transform in the book \cite{jean}. 
 
 Motivated by these and previous studies, in \cite{San1,San}, we study and extend some of the aforementioned results to the setting of   affine Poincar\'e group $\P$. As already noted, the situation here is similar to that of the similitude group, however, we have hyperbolic rotations now and not rigid rotations of space. This group is useful in problems involving the detection of extremely fast moving objects (such as occurs, for example, in high energy physical experiments).
 
 The paper is organized as follows, In Section \ref{s2}, we study the group structure of the affine Poincar\'e group. We investigate harmonic analysis for the group $\P$ in Section \ref{s3}. The construction of pseudo-differential operators, and their boundedness are discussed in Section \ref{s4}. We  also  obtain a necessary and sufficient conditions on the symbol $\tau$ such that the corresponding pseudo-differential operator $T_\tau$ on $\P$ is a  Hilbert--Schmidt operator. We present a characterization  for the  trace class pseudo-differential operators on  $\P$,  and  find their trace formula.  Finally, we introduce the Weyl transform associated with Wigner transform for affine Poincar\'e group $\P$ in Section \ref{s5}.
 
 \section{The affine Poincar\'e Goup}\label{s2}
 
 The affine Poincar\'e group $\P$ is a generalization of the affine group, in fact, it is a complexification of the affine group. This group arises in the study of $2$-dimensional wavelet transform. Similar to four parameter similitude group, $\P$ contains the translations $b$ in the image plane $\mathbb{R}^2$, global dilations (zooming in and out by $a>0$), but it has hyperbolic rotations around the origin ($\v\in \R$). The action on the plane is given by
	$$x=(b,a,\v)y= a\Lambda_{\v}y+b,$$ where
	$b\in \mathbb{R}^2$, $a>0$, and $\Lambda_{\v}$ is the $2\times 2$ hyperbolic rotation matrix
	\begin{equation}
		\Lambda_{\v}=\left(\begin{matrix}
			\cosh\v &  \sinh\v\\
			\sinh\v   &  \cosh\v
		\end{matrix}\right).
	\end{equation}
	A convenient representation of the joint transformation $(b,a,\v)$ is in the form of $3\times 3$ matrices
	\begin{equation}
		(b,a,\v)=\left(\begin{matrix}
			a\Lambda_\v &  b\\
			0^{T}   &  1
		\end{matrix}\right),~~~ 0^{T}=(0,0).
	\end{equation}
Then the matrix multiplications gives the composition of successive transformations and thus the group law is derived as
	\begin{eqnarray}
		(b,a,\v)\ast (b^{\prime},a^{\prime},\v^{\prime})&=&(b+a\Lambda_{\v} b^{\prime},aa^{\prime},\v+\v^{\prime}),\nonumber
	\end{eqnarray}
	With respect to the operation $\ast$, $\P$ is a non-abelian group in which $({0},1,0)$ is the identity element and $(\frac{-1}{a}\Lambda_{-\v}{b},\frac{1}{a},-\v)$ is the inverse of $({b},a,\v)$ in $\P$. Also, it can be shown that $\P$ is a non-unimodular group as its left and right Haar measures   $$d\mu_{L}({b},a,\v)=\dfrac{d{b}dad\v}{a^3},~~ d\mu_{R}({b},a,\v)=\dfrac{d{b}dad\v}{a},$$ respectively are different, and hence the modular function is given by $\Delta(b,a,\v)=\dfrac{1}{a^2}$.
	Moreover, the affine Poincar\'e group $\P$ has the structure of a semi-direct product:
	$$\P=\mathbb{R}^{2}\rtimes(\mathbb{R}_{\ast}^{+}\times \mathrm{SO}(1,1)),$$ where $\mathbb{R}^2$ is the subgroup of the translations, $\mathbb{R}^{\ast}_{+}$ that of dilations, and $\mathrm{SO}(1,1)$ of hyperbolic rotations. Topologically, one can write
	$\P=\R^2\times\mathcal{C}$, where $\mathcal{C}$ is any one of the four cones:
	\begin{eqnarray}
		C_{1}^1=\{x\in\R^2:x_1^2>x_2^2, +x_1>0\} \\
		C_{2}^1=\{x\in\R^2:x_1^2>x_2^2,- x_1>0\}\\
		C_{1}^2=\{x\in\R^2:x_1^2<x_2^2,+ x_1>0\}\\
		C_{2}^2=\{x\in\R^2:x_1^2<x_2^2,- x_1>0\}.
	\end{eqnarray}
	 Let us define the Fourier transform $\F$ and inverse Fourier transform $\F^{-1}$,  by 
	 \begin{eqnarray}
	 	(\mathcal{F}\varphi)(\xi)=\frac{1}{2\pi}\int_{\R^2}e^{i\langle\xi;x\rangle}\varphi(x)dx \label{MinkowskiF}\\
	 	(\mathcal{F}^{-1}\varphi)(x)=\frac{1}{2\pi}\int_{\R^2}e^{-i\langle\xi;x\rangle}(\F{\varphi})(\xi)d\xi ,\label{inverseMinkowskiF} 
	 \end{eqnarray}   
for all $\varphi\in S(\R^n)$,	where $\langle x;y\rangle=x_1y_1-x_2y_2$ is the  Minkowski inner product. In this article, we denote $\F$ and $\F^{-1}$ are the Minkowski-Fourier transform and inverse Minkowski-Fourier transform on $\R^2$, respectively. 
\begin{rem}
	The relation between Euclidean Fourier transform and Minkowski-Fourier transform on $\R^2$ is $$(\mathcal{F}\varphi)(\xi_{1},\xi_2)=\widehat{\varphi}(-\xi_1,\xi_2), (\F^{-1}\varphi)(\xi_1,\xi_2)=\check{\varphi}(-\xi_1,\xi_2),$$ where $\widehat{\varphi}$ and $\check{\varphi}$ are the Euclidean Fourier transform  and inverse Euclidean Fourier transform $\varphi$ on $\R^2$, respectively. The reader can easily verify that the Minkowski-Fourier transform carries almost all properties similar to Euclidean Fourier transform on $\R^2$, like Plancherel formula, Parseval identity.  
\end{rem}
\section{Harmonic Analysis on the affine Poincar\'e Group}\label{s3}
Let us denote $L^2(C_{i}^j), i,j=1,2$, be the set of all square integrable of complex-valued functions on $C_{i}^j\subset\P$.	
	Define the mappings $\pi_i^j:\P\to U\big(L^2(C_i^j)\big), i,j=1,2$ by
	$$(\pi_{i}^j(b,a,\v)\varphi)(x)=ae^{i\langle x;b \rangle}\varphi(a\Lambda_{-\v}x), i,j=1,2,$$ for all $(b,a,\v)\in\P$ and $\varphi\in L^2(C_{i}^{j})$,  where $U\big(L^2(C_i^j)\big)$ is the set of all unitary operators on $L^2(C_i^j), i,j=1,2.$
	\begin{theorem}
		$\{\pi_i^j:i,j=1,2\}$ are the only infinite dimensional, irreducible and unitary representation of the affine Poincar\'e group $\P$.
	\end{theorem}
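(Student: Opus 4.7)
The plan is to apply Mackey's little-group machinery for the regular semidirect product $\P=\R^2\rtimes H$, where $N=\R^2$ is the abelian normal subgroup of translations and $H=\R^{+}_{\ast}\times \mathrm{SO}(1,1)$ acts on $\R^2$ by $(a,\v)\cdot y=a\Lambda_{\v}y$. Identifying $\widehat{N}$ with $\R^2$ via the Minkowski pairing, a character of $N$ is $\chi_\xi(b)=e^{i\langle\xi;b\rangle}$, so that the dual action of $H$ takes the form $(a,\v)\cdot\xi=a\Lambda_{\v}\xi$; compatibility with the Minkowski form is immediate from $\eta\Lambda_{\v}\eta^{-1}=\Lambda_{-\v}^{T}$, where $\eta=\mathrm{diag}(1,-1)$.

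First, I would classify the $H$-orbits in $\widehat{N}$. Since $\Lambda_{\v}$ preserves the Minkowski quadratic form $x_1^2-x_2^2$ and the sign of the appropriate coordinate on each sheet of a hyperboloid, and dilations by $a>0$ scale radially, the four open cones $C_1^1,C_2^1,C_1^2,C_2^2$ are precisely the open $H$-orbits, and they exhaust $\R^2$ up to the null light-cone $\{x_1^2=x_2^2\}$. A direct inspection then shows that the stabilizer of any $\xi\in C_i^j$ is trivial: $a\Lambda_{\v}\xi=\xi$ with $\xi$ inside an open cone forces $a=1$ and $\v=0$.

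Next, by Mackey's imprimitivity theorem, each open orbit $C_i^j$ gives rise to a single (up to unitary equivalence) irreducible unitary representation of $\P$, realized as $\mathrm{Ind}_{N}^{\P}\chi_{\xi_0}$ on $L^2(C_i^j,dx)$. Transporting this induced representation along the orbit map $(a,\v)\mapsto a\Lambda_{\v}\xi_0$, one finds that translations act by multiplication with the character $e^{i\langle x;b\rangle}$, while the homogeneous part acts by $\varphi(x)\mapsto a\,\varphi(a\Lambda_{-\v}x)$. The prefactor $a$ is the square root of the Jacobian $|\det(a\Lambda_{-\v})|=a^2$ and makes the operator unitary. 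The group law $\pi_i^j(g_1)\pi_i^j(g_2)=\pi_i^j(g_1\ast g_2)$ is then a short calculation using the product rule $\Lambda_{\v_1}\Lambda_{\v_2}=\Lambda_{\v_1+\v_2}$ together with the intertwining identity $\langle a\Lambda_{-\v}x;b\rangle=\langle x;a\Lambda_{\v}b\rangle$ for the Minkowski pairing, which follows from the relation $\eta\Lambda_{\v}\eta^{-1}=\Lambda_{-\v}^{T}$ recorded above.

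The main obstacle is the exhaustiveness clause. The origin orbit $\{0\}$ has stabilizer $H$, which is abelian, so Mackey produces only characters, hence one-dimensional representations. For the degenerate light-cone orbits the stabilizers are certain one-parameter subgroups of $H$, but these orbits together form a set of Plancherel measure zero in $\widehat N$; the standard regularity argument — using that $\P$ is a type I semidirect product and that the orbit space of $H$ on $\widehat{N}$ is countably separated — identifies the four open-orbit representations $\pi_i^j$ as the only infinite-dimensional equivalence classes in the generic part of the dual. A final check that the $\pi_i^j$ are pairwise inequivalent is automatic: their spectral supports under the restriction to $N$ are the distinct orbits $C_i^j$.
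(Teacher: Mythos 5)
The paper offers no proof of this statement at all --- it simply points to \cite{jean} --- so your Mackey-theoretic sketch supplies an argument where the paper supplies none. The part of your argument concerning the open orbits is sound: the dual action of $H=\R^{+}_{\ast}\times \mathrm{SO}(1,1)$ on $\widehat{\R^2}\cong\R^2$ preserves the sign of $\xi_1^2-\xi_2^2$ and the sign of the dominant coordinate, the four open cones are single free orbits (the stabilizer computation via $a^2\langle\xi;\xi\rangle=\langle\xi;\xi\rangle$ forcing $a=1$, $\v=0$ is exactly right), and inducing the corresponding character of $N=\R^2$ yields the operators $\varphi\mapsto a\,e^{i\langle x;b\rangle}\varphi(a\Lambda_{-\v}x)$ on $L^2(C_i^j)$ with the prefactor $a$ accounting for the Jacobian. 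The pairwise inequivalence via the spectral support of the restriction to $N$ is also correct.

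The gap is in the exhaustiveness clause, and it is not merely cosmetic. The four punctured half-lines of the light cone $\{\xi_1^2=\xi_2^2\}\setminus\{0\}$ are also $H$-orbits, and you correctly observe that their stabilizers are one-parameter subgroups of $H$ (for $\xi=(1,1)$ one has $a\Lambda_{\v}\xi=ae^{\v}\xi$, so the stabilizer is $\{ae^{\v}=1\}\cong\R$). But Mackey's machinery then attaches to each such orbit a whole family of irreducible unitary representations of $\P$, induced from the characters of that stabilizer, and each of these is realized on $L^2$ of a one-dimensional homogeneous space $H/H_\xi\cong\R$ --- hence is \emph{infinite-dimensional}. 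Saying that these orbits have Plancherel measure zero shows that the corresponding representations do not appear in the Plancherel decomposition of the regular representation; it does not show that they fail to exist as infinite-dimensional irreducible unitary representations, which is what the theorem literally asserts. Your own phrasing (``the only infinite-dimensional equivalence classes in the generic part of the dual'') concedes exactly this weaker conclusion. So either the argument must be upgraded to a claim it cannot deliver (the literal statement is in fact false as written), or the statement must be read --- as it is in \cite{jean} and as the rest of the paper uses it --- as asserting that the $\pi_i^j$ are the only square-integrable infinite-dimensional representations, equivalently the only ones entering the Plancherel formula. You should either prove square-integrability of the $\pi_i^j$ (e.g.\ by exhibiting the Duflo--Moore operators $D_i^j$ and admissible vectors) and the non-square-integrability of the light-cone representations, or state explicitly that the classification is only up to the null part of the dual.
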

\begin{pff}
A brief discussion of this proof is available in \cite{jean}.	
\end{pff}

	Let $\varphi\in L^2(C_{i}^{j})$. We define the operators 
	\begin{equation}\label{duflo}
	(D_{i}^j\varphi)(x)=\frac{1}{2\pi}|\langle x;x\rangle|^{1/2}\varphi(x),
	\end{equation}
	for $x\in C_i^j$, where $i,j=1,2$. These linear operators $D_i^j, i,j=1,2,$ are unbounded, self-adjoint operators and are known as Duflo-Moore operators. For a detailed study on Duflo-Moore operators, we refer to \cite{duflo}. This Duflo-Moore operators are used to define the Fourier transform and to obtain the Plancherel theorem for the non-unimodular group $\P$.

	 For $f\in L^2(\P)\cap L^1(\P)$, we define the Fourier transform of $f$ on $L^2(C_{i}^j)$ by
	 $$(\widehat{f}(\pi_{i}^j)\varphi)(x)=\int_{\P}f(b,a,\v)\big(\pi_{i}^{j}(b,a,\v)(D_{i}^j\varphi)\big)(x)d\mu(b,a,\v),$$ for all $\varphi\in L^2(C_i^j), i,j=1,2$.
	 \begin{theorem}\label{kernelF}
	 	Let $f\in L^1(\P)\cap L^2(\P)$. Then $\widehat{f}(\pi_i^j)$ are Hilbert-Schmidt operator on $L^2(C_i^j)$ with the kernel given by
	 	\begin{equation}\label{kernel}
	 	 K_{i,j}^f(x,y)=(\mathcal{F}_1f)\Bigg(x,\Bigg(\dfrac{\langle y;y\rangle}{\langle x;x\rangle}\Bigg)^{\frac{1}{2}}, \cosh^{-1}\Bigg(\dfrac{\langle x;y\rangle}{ \langle x;x\rangle}\times \Bigg(\dfrac{\langle x;x\rangle}{\langle y;y\rangle}\Bigg)^{\frac{1}{2}}\Bigg)|\langle x;x\rangle|^{\frac{1}{2}}\dfrac{1}{|\langle y,y\rangle|},
	 		\end{equation}	
	for all $(x,y)\in C_i^j\times C_i^j$, $i,j=1,2$, where $\F_1$ denotes the Minkowski-Fourier transform of $f$ with respect to the first variable.
	 	
	 \end{theorem}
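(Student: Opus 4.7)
The plan is to unfold the definition of $\widehat{f}(\pi_i^j)\varphi$ into an iterated integral, convert it into an integral operator in $\varphi$ by the change of variables $y=a\Lambda_{-\v}x$, read off its kernel, and then verify $L^2(C_i^j\times C_i^j)$-membership of that kernel via the Plancherel formula for the Minkowski--Fourier transform.

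First I would substitute the explicit expressions for $\pi_i^j(b,a,\v)$, for $D_i^j$ from \eqref{duflo}, and for the left Haar measure $db\,da\,d\v/a^3$ into the definition of $\widehat{f}(\pi_i^j)\varphi$. Since $\Lambda_{-\v}$ preserves the Minkowski form, $|\langle a\Lambda_{-\v}x;a\Lambda_{-\v}x\rangle|^{1/2}=a|\langle x;x\rangle|^{1/2}$, which strips a clean power of $a$ out of the integrand. Applying Fubini to perform the $b$-integral first then produces the Minkowski--Fourier transform $\F_1 f$ in the first slot (using the normalization in \eqref{MinkowskiF}), leaving a double integral of $(\F_1 f)(x,a,\v)\,|\langle x;x\rangle|^{1/2}\varphi(a\Lambda_{-\v}x)\,da\,d\v/a$ over $(0,\infty)\times\R$.

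The decisive step is the change of variables $(a,\v)\mapsto y:=a\Lambda_{-\v}x$, which for fixed $x\in C_i^j$ is a bijection onto $C_i^j$. Inverting it gives
\[
a=\Bigl(\tfrac{\langle y;y\rangle}{\langle x;x\rangle}\Bigr)^{1/2},\qquad \cosh \v = \tfrac{\langle x;y\rangle}{\langle x;x\rangle}\Bigl(\tfrac{\langle x;x\rangle}{\langle y;y\rangle}\Bigr)^{1/2},
\]
and a direct Jacobian determinant computation (using $\partial_\v(a\Lambda_{-\v}x)=-(y_2,y_1)$) yields $dy = (|\langle y;y\rangle|/a)\,da\,d\v$. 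Assembling these, the remaining integral collapses to $\int_{C_i^j}K_{i,j}^f(x,y)\varphi(y)\,dy$ with $K_{i,j}^f$ matching \eqref{kernel} exactly.

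For the Hilbert--Schmidt property I would reverse the same substitution in the $y$-integral of $\iint|K_{i,j}^f(x,y)|^2\,dx\,dy$. The factors $|\langle x;x\rangle|$ and $|\langle y;y\rangle|$ telescope, reducing matters to
\[
\iint_{C_i^j\times C_i^j}|K_{i,j}^f(x,y)|^2\,dx\,dy \;=\; \int_0^{\infty}\!\!\int_{\R}\!\!\int_{C_i^j}|(\F_1 f)(x,a,\v)|^2\,dx\,\frac{d\v\,da}{a^3},
\]
which by Plancherel for the Minkowski--Fourier transform and the form of $d\mu_L$ is bounded by $\|f\|_{L^2(\P)}^2$. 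The main obstacle is the parameterisation/Jacobian bookkeeping in the key step: verifying that the inverse formulas for $a$ and $\v$ are well-defined on each of the four cones (where $\langle x;x\rangle$ and $\langle y;y\rangle$ are of a common, fixed sign) and that the Jacobian factor $|\langle y;y\rangle|/a$ combines cleanly with the Haar factor $1/a$ and the Duflo--Moore factor $|\langle x;x\rangle|^{1/2}$ to produce precisely the kernel in \eqref{kernel}.
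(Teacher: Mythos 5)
Your proposal follows essentially the same route as the paper's proof: unfold the definition, use invariance of the Minkowski form under $\Lambda_{-\v}$, perform the $b$-integral to produce $\F_1 f$, change variables $y=a\Lambda_{-\v}x$ with the stated inverse formulas and Jacobian (your $|\langle y;y\rangle|/a$ equals the paper's $a|\langle x;x\rangle|$), and verify the Hilbert--Schmidt property by reversing the substitution and applying Plancherel. The argument is correct and matches the paper's proof in all essentials.
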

 \begin{pff}
 		Let $\varphi\in L^2(C_i^j),i,j=1,2$. Then for all $i,j=1,2$, using \eqref{MinkowskiF},  we have
 	\begin{align*}
 		(\widehat{f}(\pi_{i}^j)\varphi)(x) &=\int\limits_{\P}f(b,a,\v)\big(\pi_{i}^{j}(b,a,\v)(D_{i}^j\varphi)\big)(x)\frac{dbdad\v}{a^3}\\
 		&=\int\limits_{\P}f(b,a,\v)ae^{i\langle x;b\rangle}(D_{i}^j\phi)(a\Lambda_{-\v}x)\frac{dbdad\v}{a^3}\\
 		&= \frac{1}{2\pi}\int\limits_{\P}f(b,a,\v)ae^{i\langle x;b\rangle}|\langle a\Lambda_{-\v}x;a\Lambda_{-\v}x\rangle|^{\frac{1}{2}}\varphi(a\Lambda_{-\v}x)\frac{dbdad\v}{a^3}\\
 		&= \frac{1}{2\pi}\int\limits_{\P}f(b,a,\v)ae^{i\langle x;b\rangle}|\langle x;x\rangle|^{\frac{1}{2}}\varphi(a\Lambda_{-\v}x)\frac{dbdad\v}{a}\\
 		& = \int_0^{\infty}\int_{\R}(\mathcal{F}_1f)(x,a,\v)|\langle x;x\rangle|^{\frac{1}{2}}\varphi(a\Lambda_{-\v}x)\frac{dad\v}{a}. 
 	\end{align*}
 	Let $a\Lambda_{-\v}x=y$, then $a=\Bigg(\dfrac{\langle y;y\rangle}{\langle x;x\rangle}\Bigg)^{\frac{1}{2}}$,   $\v=\cosh^{-1}\Bigg(\dfrac{\langle x;y\rangle}{ \langle x;x\rangle}\times \Bigg(\dfrac{\langle x;x\rangle}{\langle y;y\rangle}\Bigg)^{\frac{1}{2}}\Bigg)$ and $dy=a|\langle x;x\rangle|dad\v.$ Hence
 	$$(\widehat{f}(\pi_{i}^j)\varphi)(x)=\int_{C_{i}^j}(\mathcal{F}_1f)\Bigg(x,\Bigg(\dfrac{\langle y;y\rangle}{\langle x;x\rangle}\Bigg)^{\frac{1}{2}}, \cosh^{-1}\Bigg(\dfrac{\langle x;y\rangle}{ \langle x;x\rangle}\times \Bigg(\dfrac{\langle x;x\rangle}{\langle y;y\rangle}\Bigg)^{\frac{1}{2}}\Bigg)|\langle x;x\rangle|^{\frac{1}{2}}\varphi(y)\dfrac{dy}{|\langle y,y\rangle|}.$$
 	Thus the kernel of $\widehat{f}(\pi_i^j)$ is
 	\begin{equation}
 	 K_{i,j}^f(x,y)=(\mathcal{F}_1f)\Bigg(x,\Bigg(\dfrac{\langle y;y\rangle}{\langle x;x\rangle}\Bigg)^{\frac{1}{2}}, \cosh^{-1}\Bigg(\dfrac{\langle x;y\rangle}{ \langle x;x\rangle}\times \Bigg(\dfrac{\langle x;x\rangle}{\langle y;y\rangle}\Bigg)^{\frac{1}{2}}\Bigg)|\langle x;x\rangle|^{\frac{1}{2}}\dfrac{1}{|\langle y,y\rangle|},
 		\end{equation}
 for all $(x,y)\in C_i^j\times C_i^j$, $i,j=1,2$. 
 	Now using the Plancherel formula for Minkowski-Fourier transform $\F$, we get
 	\begin{align*}
 		&	\int_{C_{i}^j}\int_{C_{i}^j}|K_{i,j}^f(x,y)|^2dxdy\\
 		&=\int_{C_{i}^j}\int_{C_{i}^j}\left|{(\mathcal{F}_1f)\Bigg(x,\Bigg(\dfrac{\langle y;y\rangle}{\langle x;x\rangle}\Bigg)^{\frac{1}{2}}, \cosh^{-1}\Bigg(\dfrac{\langle x;y\rangle}{ \langle x;x\rangle}\times \Bigg(\dfrac{\langle x;x\rangle}{\langle y;y\rangle}\Bigg)^{\frac{1}{2}}\Bigg)}\right|^2\dfrac{|\langle x;x\rangle|}{|\langle y;y\rangle|^2}dxdy\\
 		&=\int_{C_{i}^j}\int_0^{\infty}\int_{\R}\left| (\mathcal{F}_1f)(x,a,\v)\right|^2\dfrac{dxdad\v}{a^3}\\
 		&\leq \int_{\R^2}\int_0^{\infty}\int_{\R}\left| (\mathcal{F}_1f)(x,a,\v)\right|^2\dfrac{dxdad\v}{a^3}\\
 		&=\int_{\R^2}\int_0^{\infty}\int_{\R}\left| f(x,a,\v)\right|^2\dfrac{dxdad\v}{a^3}<\infty.
 	\end{align*}
 	
 \end{pff}


In the next lemma, we show $D_{i}^j\widehat{f}(\pi_{i}^j)\pi_{i}^j(b,a,\v)^{\ast},i,j=1,2$ are integral operators.
	\begin{lemma}
	Let $f\in L^1(\P)\cap L^2(\P)$. Then	$D_{i}^j\widehat{f}(\pi_{i}^j)\pi_{i}^j(b,a,\v)^{\ast}$ are integral operators on $L^2(C_i^j)$ with kernel
		$$S^{f,\pi_{i}^j}(x,y)=\dfrac{a}{2\pi}|\langle x;x\rangle|^{\frac{1}{2}}K_{i,j}^f(x,a\Lambda_{-\v}y)e^{-i\langle y;b\rangle},$$ for all $(x,y)\in C_i^j\times C_i^j,$ where $K_{i,j}^f$ is defined in \eqref{kernel}, $i,j=1,2$.  
	\end{lemma}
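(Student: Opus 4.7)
The plan is to compute the action of $D_i^j\widehat{f}(\pi_i^j)\pi_i^j(b,a,\v)^{\ast}$ on an arbitrary $\varphi\in L^2(C_i^j)$ and then read off the integral kernel by inspection. Three ingredients are needed: (i) the explicit form of $\pi_i^j(b,a,\v)^{\ast}=\pi_i^j((b,a,\v)^{-1})$, obtained from the inverse formula $(b,a,\v)^{-1}=(-\tfrac{1}{a}\Lambda_{-\v}b,\tfrac{1}{a},-\v)$ recorded in Section \ref{s2}; (ii) the integral representation of $\widehat{f}(\pi_i^j)$ via the kernel $K_{i,j}^f$ from Theorem \ref{kernelF}; (iii) the definition \eqref{duflo} of $D_i^j$ as multiplication by $\tfrac{1}{2\pi}|\langle x;x\rangle|^{1/2}$.

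First I would set $\psi:=\pi_i^j(b,a,\v)^{\ast}\varphi$. Inserting the formula for $\pi_i^j$ at $(b,a,\v)^{-1}$ gives
$$\psi(z)=\tfrac{1}{a}\,e^{-i\langle z;\,\tfrac{1}{a}\Lambda_{-\v}b\rangle}\,\varphi\!\left(\tfrac{1}{a}\Lambda_{\v}z\right).$$
Here I would use the Lorentz-invariance identity $\langle \Lambda_{\v}u;v\rangle=\langle u;\Lambda_{-\v}v\rangle$, which is an immediate consequence of $\Lambda_{\v}^{T}J\Lambda_{\v}=J$ with $J=\mathrm{diag}(1,-1)$, in order to rewrite the phase as $\tfrac{1}{a}\langle \Lambda_{\v}z;b\rangle$.

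Next, I would substitute $\psi$ into the kernel representation from Theorem \ref{kernelF},
$$(\widehat{f}(\pi_i^j)\psi)(x)=\int_{C_i^j}K_{i,j}^f(x,z)\,\psi(z)\,dz,$$
and perform the change of variables $y=\tfrac{1}{a}\Lambda_{\v}z$, equivalently $z=a\Lambda_{-\v}y$. Since $\det\Lambda_{-\v}=\cosh^{2}\v-\sinh^{2}\v=1$, the Jacobian equals $a^{2}$; the Lorentz identity then reduces the phase to $e^{-i\langle y;b\rangle}$ because $\langle \Lambda_{\v}z;b\rangle=\langle ay;b\rangle=a\langle y;b\rangle$. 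Observing also that $z\in C_i^j$ iff $y\in C_i^j$, the integration domain is preserved, and I obtain
$$(\widehat{f}(\pi_i^j)\psi)(x)=\int_{C_i^j}a\,K_{i,j}^f(x,a\Lambda_{-\v}y)\,e^{-i\langle y;b\rangle}\,\varphi(y)\,dy.$$

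Finally, applying $D_i^j$ amounts by \eqref{duflo} to multiplying the integrand by $\tfrac{1}{2\pi}|\langle x;x\rangle|^{1/2}$, which immediately identifies the announced kernel
$$S^{f,\pi_i^j}(x,y)=\frac{a}{2\pi}|\langle x;x\rangle|^{1/2}K_{i,j}^f(x,a\Lambda_{-\v}y)\,e^{-i\langle y;b\rangle}.$$
I expect the only delicate point to be the careful bookkeeping of the Minkowski pairing under hyperbolic rotations together with verifying the invariance of the cones $C_i^j$ under $y\mapsto a\Lambda_{-\v}y$, so that the substitution is genuinely a change of variables within $L^2(C_i^j)$; the remaining manipulations are routine.
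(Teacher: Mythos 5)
Your proof is correct and follows essentially the same route as the paper: both insert the kernel representation of $\widehat{f}(\pi_i^j)$ from Theorem \ref{kernelF}, realize $\pi_i^j(b,a,\v)^{\ast}$ via the group inverse, perform the substitution $z=a\Lambda_{-\v}y$ (Jacobian $a^2$), and finish by applying the multiplication operator $D_i^j$. You merely spell out the adjoint formula, the Lorentz-invariance of the Minkowski pairing, and the invariance of the cones $C_i^j$ more explicitly than the paper, which compresses these into a single line.
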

	\begin{pff}
		Using the Theorem \ref{kernelF} and Duflo-Moore operator in \eqref{duflo}, we get
		\begin{align*}
			&(D_{i}^j\widehat{f}(\pi_{i}^j)\pi_{i}^j(b,a,\v)^{\ast}\varphi)(x)\\
			 &= \frac{1}{2\pi}|\langle x;x\rangle|^{\frac{1}{2}}(\widehat{f}(\pi_{i}^j)\pi_{i}^j(b,a,\v)^{\ast}\varphi)(x)\\
			& = \frac{1}{2\pi}|\langle x;x\rangle|^{\frac{1}{2}}\int_{C_{i}^j}K_{i,j}^f(x,y)(\pi_{i}^j(b,a,\v)^{\ast}\varphi)(y)dy\\
			& = \frac{1}{2\pi}|\langle x;x\rangle|^{\frac{1}{2}}\int_{C_{i}^j}K_{i,j}^f(x,a\Lambda_{-\v}y)e^{-i\langle y;b\rangle}\varphi(y)ady.
		\end{align*}
	Hence the kernel of $D_{i}^j\widehat{f}(\pi_{i}^j)\pi_{i}^j(b,a,\v)^{\ast}$ is given by
	$$S^{f,\pi_{i}^j}(x,y)=\dfrac{a}{2\pi}|\langle x;x\rangle|^{\frac{1}{2}}K_{i,j}^f(x,a\Lambda_{-\v}y)e^{-i\langle y;b\rangle}, i,j=1,2.$$
	\end{pff}

Now we are ready to get the Fourier inversion formula. 
\begin{theorem}[Fourier Inversion Formula]
	
	Let $f\in L^1(\P)\cap L^2(\P)$, then
	\begin{equation}		f(b,a,\v)=\Delta(b,a,\v)^{-\frac{1}{2}}\sum_{i,j=1}^2\text{Tr}\Big[D_{i}^j\widehat{f}(\pi_{i}^j)\pi_{i}^j(b,a,\v)^{\ast}\Big], 
	\end{equation}
for all $(b,a,\v)\in \P.$
\end{theorem}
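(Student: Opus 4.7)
The plan is to combine the integral-operator representation of $D_{i}^{j}\widehat{f}(\pi_{i}^{j})\pi_{i}^{j}(b,a,\v)^{\ast}$ given by the preceding lemma with the trace-as-diagonal-integral identity, and reduce the final expression to the inverse Minkowski--Fourier transform in the first variable. First, I would write
\begin{equation*}
\mathrm{Tr}\bigl[D_{i}^{j}\widehat{f}(\pi_{i}^{j})\pi_{i}^{j}(b,a,\v)^{\ast}\bigr]=\int_{C_{i}^{j}}S^{f,\pi_{i}^{j}}(x,x)\,dx,
\end{equation*}
using that the operator is integral with kernel $S^{f,\pi_{i}^{j}}(x,y)=\tfrac{a}{2\pi}|\langle x;x\rangle|^{1/2}K_{i,j}^{f}(x,a\Lambda_{-\v}y)e^{-i\langle y;b\rangle}$ (assuming trace class; see the remark at the end).

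The decisive calculation is the evaluation of the diagonal. Setting $y=x$ in the kernel, I would exploit the Minkowski isometry property of hyperbolic rotations: $\langle \Lambda_{-\v}x;\Lambda_{-\v}x\rangle=\langle x;x\rangle$, together with the direct computation $\langle x;\Lambda_{-\v}x\rangle=\cosh\v\cdot\langle x;x\rangle$. Substituting $y'=a\Lambda_{-\v}x$ into the kernel formula \eqref{kernel} gives $\sqrt{\langle y';y'\rangle/\langle x;x\rangle}=a$ and $\cosh^{-1}\bigl(\langle x;y'\rangle/\langle x;x\rangle\cdot\sqrt{\langle x;x\rangle/\langle y';y'\rangle}\bigr)=\v$, so the arguments of $\mathcal{F}_{1}f$ collapse to $(x,a,\v)$ while the geometric factors simplify to
\begin{equation*}
S^{f,\pi_{i}^{j}}(x,x)=\frac{1}{2\pi a}(\mathcal{F}_{1}f)(x,a,\v)\,e^{-i\langle x;b\rangle}.
\end{equation*}

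Next, I would sum over $i,j=1,2$. Since the four cones $C_{i}^{j}$ partition $\R^{2}$ up to the measure-zero light cone $\{x_{1}^{2}=x_{2}^{2}\}$, the sum of integrals over $C_{i}^{j}$ becomes an integral over $\R^{2}$:
\begin{equation*}
\sum_{i,j=1}^{2}\mathrm{Tr}\bigl[D_{i}^{j}\widehat{f}(\pi_{i}^{j})\pi_{i}^{j}(b,a,\v)^{\ast}\bigr]=\frac{1}{a}\cdot\frac{1}{2\pi}\int_{\R^{2}}(\mathcal{F}_{1}f)(x,a,\v)\,e^{-i\langle x;b\rangle}dx.
\end{equation*}
By definition of $\mathcal{F}^{-1}$ in \eqref{inverseMinkowskiF} and symmetry of the Minkowski pairing, the inner integral is precisely $\mathcal{F}^{-1}[(\mathcal{F}_{1}f)(\cdot,a,\v)](b)=f(b,a,\v)$. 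Recalling $\Delta(b,a,\v)=1/a^{2}$, hence $\Delta^{-1/2}=a$, we obtain the stated inversion formula.

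The main obstacle is the standard one: the identity $\mathrm{Tr}(T)=\int K(x,x)dx$ is valid when $T$ is trace class with a sufficiently regular kernel, but membership of $f$ in $L^{1}(\P)\cap L^{2}(\P)$ alone only gives Hilbert--Schmidt operators via Theorem \ref{kernelF}. I would therefore first establish the formula on a dense subspace of functions in $L^{1}(\P)\cap L^{2}(\P)$ for which $D_{i}^{j}\widehat{f}(\pi_{i}^{j})\pi_{i}^{j}(b,a,\v)^{\ast}$ is known to be trace class (e.g.\ Schwartz-type functions on $\P$, or finite linear combinations of convolutions $g\ast h$ with $g,h\in L^{1}\cap L^{2}$, which factor each operator as a product of two Hilbert--Schmidt operators), and then extend by continuity using the Plancherel formula for $\P$. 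Apart from this justification, every other step is an explicit computation using Minkowski invariance and the Fourier inversion formula on $\R^{2}$.
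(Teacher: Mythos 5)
Your proposal is correct and follows essentially the same route as the paper: evaluate the diagonal of the kernel $S^{f,\pi_i^j}$ using the Minkowski invariance of $\Lambda_{-\v}$, sum the resulting integrals over the four cones to recover an integral over $\R^2$, and apply the inverse Minkowski--Fourier transform. Your closing remark on justifying $\mathrm{Tr}(T)=\int K(x,x)\,dx$ via a trace-class/density argument addresses a point the paper passes over in silence, and is a welcome addition rather than a deviation.
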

\begin{pff}
	We note that
	\begin{align}\label{S}
		S^{f,\pi_{i}^j}(x,x) &=\dfrac{a}{2\pi}|\langle x;x\rangle|^{\frac{1}{2}}K_{i,j}^f(x,a\Lambda_{-\v}x)e^{-i\langle x;b\rangle}\nonumber\\
		&=\dfrac{1}{2\pi a}(\mathcal{F}_1f)(x,a,\v)e^{-i\langle x;b\rangle}\nonumber\\
		&= \dfrac{1}{2\pi}\Delta(b,a,\v)^{\frac{1}{2}}(\mathcal{F}_1f)(x,a,\v)e^{-i\langle x;b\rangle},	
	\end{align}
	for all $x\in C_i^j$, $i,j=1,2.$
	
	Now using the equation \eqref{S} and inverse Minkowski-Fourier transform \eqref{inverseMinkowskiF}, we obtain
	\begin{align*}
		&\Delta(b,a,\v)^{-\frac{1}{2}}\sum_{i,j=1}^2\text{Tr}\Big(D_{i}^j\widehat{f}(\pi_{i}^j)\pi_{i}^j(b,a,\v)^{\ast}\Big)\\
		&=\Delta(b,a,\v)^{-1/2}\Bigg[\int_{C_{1}^1}S^{f,\pi_{1}^1}(x,x)dx+\int_{C_{2}^1}S^{f,\pi_{2}^1}(x,x)dx\\
		&+\int_{C_{1}^2}S^{f,\pi_{1}^2}(x,x)dx+\int_{C_{2}^2}S^{f,\pi_{2}^2}(x,x)dx\Bigg]\\
		&=\dfrac{1}{2\pi}\int_{\R^2}(\mathcal{F}_1f)(x,a,\v)e^{-i\langle x;b\rangle}dx\\
		&= f(b,a,\v),
	\end{align*}
 for all $(b,a,\v)$ in $\P$.
\end{pff}

Let $S_2(C_i^j)$ be the set of all Hilbert-Schmidt operators on $L^2(C_i^j), i,j=1,2$. Let us denote $$S_2=S_2(C_1^1)\cup S_2(C_2^1)\cup S_2(C_2^2)\cup S_2(C_1^2).$$
Let $L^2(\widehat{\P},S_2)$ be the space of all functions $F:\widehat{\P}\to S_2(\R^2)$ such that $$F(\pi_i^j)\in S_2(C_i^j),i,j=1,2.$$ The reader can easily check that $L^2(\widehat{\P},S_2)$ is a Hilbert space with the inner product
\begin{equation}\label{Plancherel}
	\langle F,G\rangle_{L^2(\widehat{\P},S_2)}=\text{Tr}\big(F(\pi_1^1)G(\pi_1^1)^{\ast}\big)+\text{Tr}\big(F(\pi_1^2)G(\pi_1^2)^{\ast}\big)+\text{Tr}\big(F(\pi_2^1)G(\pi_2^1)^{\ast}\big)+\text{Tr}\big(F(\pi_2^2)G(\pi_2^2)^{\ast}\big),
\end{equation} and norm
$$\|F\|_{L^2(\widehat{\P},S_2)}=\|F(\pi_1^1)\|_{S_2(C_1^1)}+\|F(\pi_2^1)\|_{S_2(C_2^1)}+\|F(\pi_1^2)\|_{S_2(C_1^2)}+\|F(\pi_2^2)\|_{S_2(C_2^2)},$$ for all $F,G\in L^2(\widehat{\P},S_2).$
For $f\in L^2(\P)$, we define the bounded linear operator $\widehat{f}(\pi):L^2(\R^2)\rightarrow L^2(\R^2)$ by
$$\widehat{f}(\pi)\varphi=\widehat{f}(\pi_{1}^1)\varphi_{1}^1+\widehat{f}(\pi_{2}^1)\varphi_{2}^1+\widehat{f}(\pi_{1}^2)\varphi_{1}^2+\widehat{f}(\pi_{2}^2)\varphi_{2}^2,$$ where $\varphi_{i}^j=\varphi\chi_{C_{i}^j}, i,j=1,2.$
%
\begin{theorem}[Plancherel Formula]\label{plancherel}
	Let $f\in L^2(\P)$. Then for all $\varphi\in L^2(\R^2)$, the operator $$(\widehat{f}(\pi)\varphi)(x)=\int_{\R^2}K^f(x,y)\varphi(y)dy, x\in\R^2,$$	where 
	\[ K^f(x,y)=
	\begin{cases}
		K_{1,1}^f(x,y), (x,y)\in C_{1}^1\times C_{1}^{1},\\
		K_{2,1}^f(x,y), (x,y)\in C_{2}^1\times C_{2}^{1},\\
		K_{1,2}^f(x,y), (x,y)\in C_{1}^2\times C_{1}^{2},\\
		K_{2,2}^f(x,y), (x,y)\in C_{2}^2\times C_{2}^{2},\\
		0,\quad\quad\quad\quad \text{otherwise},
	\end{cases}
	\]
	and $K_{i,j}^f,i,j=1,2$, are defined in \eqref{kernel}, is a Hilbert-Schmidt operator on $L^2(\R^2)$ and moreover,
	$$\|\widehat{f}\|_{L^2(\widehat{\P},S_2)}=\|\widehat{f}(\pi_1^1)\|^2_{S_2}+\|\widehat{f}(\pi_2^1)\|^2_{S_2}+\|\widehat{f}(\pi_1^2)\|^2_{S_2}+\|\widehat{f}(\pi_2^2)\|^2_{S_2}=\|f\|^2_{L^2(\P)}.$$
\end{theorem}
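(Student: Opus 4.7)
The plan is to reduce the Plancherel identity for $\P$ to the Plancherel formula for the Minkowski-Fourier transform on $\R^2$, using the kernel computation already established in Theorem \ref{kernelF} and the fact that the four light cones form a measurable partition of $\R^2$ up to a set of Lebesgue measure zero.

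First I would work under the assumption $f\in L^1(\P)\cap L^2(\P)$ so that Theorem \ref{kernelF} applies directly; the extension to general $f\in L^2(\P)$ will follow at the end by a standard density argument, since $L^1(\P)\cap L^2(\P)$ is dense in $L^2(\P)$ and the map $f\mapsto \widehat{f}(\pi)$ is going to be shown isometric. Next, I would note that the set $\{x\in\R^2:x_1^2=x_2^2\}$ has Lebesgue measure zero, so $\R^2=C_1^1\sqcup C_2^1\sqcup C_1^2\sqcup C_2^2$ a.e., and therefore $L^2(\R^2)=\bigoplus_{i,j=1}^2 L^2(C_i^j)$ under the natural identification $\varphi\mapsto(\varphi_i^j)_{i,j}$ with $\varphi_i^j=\varphi\chi_{C_i^j}$. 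Under this splitting, the definition $\widehat{f}(\pi)\varphi=\sum_{i,j}\widehat{f}(\pi_i^j)\varphi_i^j$ shows that $\widehat{f}(\pi)$ is precisely the block-diagonal direct sum of the four Hilbert--Schmidt operators $\widehat{f}(\pi_i^j)$, hence is an integral operator on $L^2(\R^2)$ whose Schwartz kernel is exactly the piecewise-defined $K^f(x,y)$ in the statement (with $K_{i,j}^f$ on the diagonal blocks and $0$ off-diagonal).

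Now I would compute the Hilbert--Schmidt norm of $\widehat{f}(\pi)$ by integrating $|K^f|^2$ over $\R^2\times\R^2$:
\begin{align*}
\|\widehat{f}(\pi)\|_{HS}^2
&=\int_{\R^2}\!\int_{\R^2}|K^f(x,y)|^2\,dx\,dy
=\sum_{i,j=1}^2\int_{C_i^j}\!\int_{C_i^j}|K_{i,j}^f(x,y)|^2\,dx\,dy.
\end{align*}
For each term I reuse the change of variables $y=a\Lambda_{-\v}x$ from the proof of Theorem \ref{kernelF}, which (together with the Jacobian $dy=a|\langle x;x\rangle|\,da\,d\v$) converts the double integral over $C_i^j\times C_i^j$ into
\[
\int_{C_i^j}\!\int_0^\infty\!\int_{\R}|(\mathcal{F}_1f)(x,a,\v)|^2\,\frac{dx\,da\,d\v}{a^3}.
\]
Summing over $i,j=1,2$ reconstructs the integral over $\R^2$ in the first variable:
\[
\sum_{i,j=1}^2\|\widehat{f}(\pi_i^j)\|_{S_2}^2
=\int_{\R^2}\!\int_0^\infty\!\int_{\R}|(\mathcal{F}_1f)(x,a,\v)|^2\,\frac{dx\,da\,d\v}{a^3}.
\]

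Finally I would apply, for each fixed $(a,\v)$, the Plancherel identity for the Minkowski-Fourier transform $\mathcal{F}_1$ on $\R^2$ (which holds with the same constants as the Euclidean Fourier transform by the remark following \eqref{inverseMinkowskiF}) to replace $|(\mathcal{F}_1f)(x,a,\v)|^2$ by $|f(b,a,\v)|^2$, producing
\[
\int_{\R^2}\!\int_0^\infty\!\int_{\R}|f(b,a,\v)|^2\,\frac{db\,da\,d\v}{a^3}=\|f\|_{L^2(\P)}^2,
\]
since $d\mu_L(b,a,\v)=db\,da\,d\v/a^3$ is the left Haar measure. This yields the desired equality and, in particular, shows $\widehat{f}(\pi)\in S_2(\R^2)$. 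The only subtlety, which I would address explicitly, is ensuring that the measure-zero union of the two lines $x_1=\pm x_2$ contributes nothing either to the $L^2(\R^2\times\R^2)$ norm of $K^f$ or to the Plancherel integral, so that the decomposition $L^2(\R^2)=\bigoplus_{i,j}L^2(C_i^j)$ is genuinely an orthogonal direct sum; once this is observed, the whole argument is a direct assembly of Theorem \ref{kernelF} with the Minkowski Plancherel identity, extended to all of $L^2(\P)$ by density.
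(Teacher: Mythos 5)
Your proposal is correct and follows essentially the same route as the paper's proof: decompose the kernel integral over the four cones, apply the change of variables $y=a\Lambda_{-\v}x$ from Theorem \ref{kernelF} to each block, reassemble the integral over $\R^2$, and invoke the Minkowski--Plancherel identity in the first variable. Your added remarks on the measure-zero boundary $\{x_1^2=x_2^2\}$ and the density extension from $L^1(\P)\cap L^2(\P)$ to $L^2(\P)$ are sensible refinements that the paper leaves implicit, but they do not change the argument.
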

\begin{proof}
	Using the Plancherel formula for Minkowski-Fourier transform, we get
	\begin{align*}
		& \int_{\R^2}\int_{\R^2}\left| K^f(x,y)\right|^2dxdy\\
		&= \int_{C_{1}^1}\int_{C_{1}^1}|K_{1,1}^f(x,y)|^2dxdy+\int_{C_{2}^1}\int_{C_{2}^1}|K_{2,1}^f(x,y)|^2dxdy\\
		&+\int_{C_{1}^2}\int_{C_{1}^2}|K_{1,2}^f(x,y)|^2dxdy+\int_{C_{2}^2}\int_{C_{2}^2}|K_{2,2}^f(x,y)|^2dxdy\\
		&= \int_{C_{1}^1}\int_{0}^{\infty}\int_{\R}\left|(\mathcal{F}_1f)(x,a,\v)\right|^2\dfrac{dxdad\v}{a^3}+\int_{C_{2}^1}\int_{0}^{\infty}\int_{\R}\left|(\mathcal{F}_1f)(x,a,\v)\right|^2\dfrac{dxdad\v}{a^3}\\
		&+\int_{C_{1}^2}\int_{0}^{\infty}\int_{\R}\left|(\mathcal{F}_1f)(x,a,\v)\right|^2\dfrac{dxdad\v}{a^3}+\int_{C_{2}^2}\int_{0}^{\infty}\int_{\R}\left|(\mathcal{F}_1f)(x,a,\v)\right|^2\dfrac{dxdad\v}{a^3}\\
		& =\int_{\R^2}\int_0^{\infty}\int_{\R}\left|(\mathcal{F}_1f)(x,a,\v)\right|^2\dfrac{dxdad\v}{a^3}\\
		& = \int_{\R^2}\int_0^{\infty}\int_{\R}\left|f(x,a,\v)\right|^2\dfrac{dxdad\v}{a^3}=\|f\|^2_{L^2(\P)}.
	\end{align*}
Note that, we can write $\|f\|_{L^2(\P)}$ as $$\|\widehat{f}(\pi_1^1)\|^2_{S_2}+\|\widehat{f}(\pi_2^1)\|^2_{S_2}+\|\widehat{f}(\pi_1^2)\|^2_{S_2}+\|\widehat{f}(\pi_2^2)\|^2_{S_2}=\|f\|^2_{L^2(\P)}.$$
\end{proof}

Let $\sigma$ be a suitable measurable function on $\R^2 \times \R^2$. Then the Weyl transform $W_{\sigma}$ corresponding to the symbol $\sigma$ is defined by
$$\langle W_{\sigma}f,g\rangle_{L^2(\R^2)}=(2\pi)^{-1}\int_{\R^2}\int_{\R^2}\sigma(x,\xi)W(f,g)(x,\xi)dxd\xi,$$	for all $f$ and $g$ in $L^2(\R^2)$, where $W(f,g)$ is the Wigner transform of $f$ and $g$, defined as
$$W(f,g)(x,\xi)=(2\pi)^{-1}\int_{\R^2}e^{-i\xi\cdot q}f\Big(x+\frac{q}{2}\Big)g\Big(x-\frac{q}{2}\Big)dq.$$
 The following corollary is about relation between the group Fourier transform $\widehat{f}(\pi)$ of $f$ at $\pi$ and Weyl transform $W_{\sigma}$.
\begin{Cor}\label{WeylF}
Let $f\in L^2(\P)$. Then $\widehat{f}(\pi)=W_{\sigma_{f}},$ where $$\sigma_{f}(x,\xi)=2\pi(\F_2TK^f)(x,\tilde{\xi}),\quad \tilde{\xi}=(-\xi_1,\xi_2),$$ and $T:L^2(\R^2)\to L^2(\R^2)$ is an unitary twisting operator defined by $$(T\phi)(t_1,t_2)=\phi\Big(t_1+\frac{t_2}{2},t_1-\frac{t_2}{2}\Big),$$	and $\F_2$ denotes Minkowski-Fourier transform  in the second variable. 
\end{Cor}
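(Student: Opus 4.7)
The plan is to identify the integral kernels of the two operators. Theorem \ref{plancherel} already realizes $\widehat{f}(\pi)$ as the integral operator on $L^2(\R^2)$ with kernel $K^f$, so it suffices to show that $W_{\sigma_f}$ is the integral operator with the same kernel $K^f$.

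First I would unwind the defining identity for the Weyl transform. Substituting the explicit formula for $W(f,g)$ into the pairing $\langle W_\sigma f,g\rangle_{L^2(\R^2)} = (2\pi)^{-1}\iint \sigma(x,\xi)W(f,g)(x,\xi)\,dxd\xi$ and performing the unit-Jacobian change of variables $z = x+q/2$, $y = x-q/2$ in the resulting triple integral identifies $W_\sigma$ as the integral operator on $L^2(\R^2)$ whose kernel is
\[ K_\sigma(z,y) = (2\pi)^{-2}\int_{\R^2}\sigma\!\left(\tfrac{z+y}{2},\xi\right) e^{i(z-y)\cdot\xi}\,d\xi, \]
with the dot denoting the Euclidean inner product on $\R^2$. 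Reparameterizing via $u=(z+y)/2$, $v=z-y$ casts this in the twisted coordinates $(TK_\sigma)(u,v) := K_\sigma(u+v/2,\,u-v/2)$ and gives
\[ (TK_\sigma)(u,v) = (2\pi)^{-2}\int_{\R^2}\sigma(u,\xi)e^{iv\cdot\xi}\,d\xi, \]
which says that $(TK_\sigma)(u,\cdot)$ is a scalar multiple of the inverse Euclidean Fourier transform of $\sigma(u,\cdot)$ in the second variable.

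Applying Fourier inversion in the second slot produces $\sigma(u,\xi) = 2\pi\,(\widehat{TK_\sigma})_{2}(u,\xi)$, where the hat denotes the Euclidean Fourier transform. Converting it to the Minkowski transform through the identity $\F\varphi(\xi_1,\xi_2)=\widehat{\varphi}(-\xi_1,\xi_2)$ recorded in the remark preceding \eqref{kernel} rewrites this as $\sigma(u,\xi) = 2\pi\,(\F_2 TK_\sigma)(u,\tilde\xi)$ with $\tilde\xi=(-\xi_1,\xi_2)$. Specializing $K_\sigma = K^f$ recovers exactly the symbol $\sigma_f$ of the statement, so $W_{\sigma_f}$ is the integral operator with kernel $K^f$ and therefore coincides with $\widehat{f}(\pi)$ on $L^2(\R^2)$. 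The only genuinely delicate point is the bookkeeping of the powers of $2\pi$ together with the Minkowski--Euclidean sign flip $\xi_1\mapsto -\xi_1$; once the two linear changes of variables are executed, the identification reduces to a direct application of Fourier inversion.
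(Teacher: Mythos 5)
Your argument is correct and is precisely the standard kernel--symbol computation for the Weyl transform that the paper does not reproduce but simply defers to \cite{wong1}: the unit-Jacobian change of variables $z=x+q/2$, $y=x-q/2$, the passage to twisted coordinates via $T$, Fourier inversion in the second slot, and the Euclidean-to-Minkowski flip $\xi_1\mapsto-\xi_1$ together yield exactly $\sigma_f(x,\xi)=2\pi(\F_2TK^f)(x,\tilde{\xi})$, after which Theorem \ref{plancherel} identifies both operators through the common kernel $K^f$. The only cosmetic discrepancy is the placement of complex conjugates in the sesquilinear pairing, an ambiguity already present in the paper's own definition of $W(f,g)$, so nothing essential is missing from your proof.
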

\begin{pff}
	The proof of the corollary can be found in \cite{wong1}.
\end{pff}
	\section{Pseudo-differential Operators on the Affine Poincar\'e group}\label{s4}
Let $\sigma:\P\times\widehat{\P}\rightarrow B(L^2(C_{i}^1))\cup B(L^2(C_{j}^2))$ such that $$\sigma(b,a,\v,\pi_{i}^j)\in B(L^2(C_{i}^j)), i,j=1,2,$$ for all $(b,a,\v)\in\P$, where $B(L^2(C_i^j))$ are the set of all bounded linear operators from $L^2(C_i^j)$ to $L^2(C_i^j), i,j=1,2$. 
We define the pseudo-differential operator $T_{\sigma}:L^2(\P)\to L^2(\P)$ corresponding to the symbol $\sigma$  on $\P$ by
\begin{equation}\label{pseudo}
	(T_{\sigma}f)(b,a,\v)=\Delta(b,a,\v)^{-1/2}\sum_{i,j=1}^2\text{Tr}\Big[D_{i}^j\sigma(b,a,\v,\pi_{i}^j)\widehat{f}(\pi_{i}^j)\pi_{i}^j(b,a,\v)^{\ast}\Big], (b,a,\v)\in\P.	
\end{equation}
This also can be write
$$(T_{\sigma}f)(b,a,\v)=\int_{\P}K(b,a,\v,b^{\prime},a^{\prime},\v^{\prime})f(b^{\prime},a^{\prime},\v^{\prime})d\mu(b,a,\v),$$
where the kernel $K$ is given by
$$K(b,a,\v,b^{\prime},a^{\prime},\v^{\prime})=\frac{1}{a}\sum_{i,j=1}^2\text{Tr}\Big[D_{i}^j\sigma(b,a,\v,\pi_{i}^j)\pi_{i}^j(b^{\prime},a^{\prime},\v^{\prime})D_{i}^j\pi_{i}^j(b,a,\v)^{\ast}\Big].$$
Let us denote $S_p(C_{i}^j)$ be the $p$-Schatten class, set of all compact operators on the Hilbert space $L^2(C_{i}^j)$ whose singular values are $p$-th summable, $i,j=1,2,$ for $1\leq p< \infty$ and $S_{\infty}(C_i^j)=B(L^2(C_i^j))$, the set of all bounded operators on $L^2(C_i^j), i,j=1,2.$ The following theorem is $L^2-L^2$ boundedness of pseudo-differential operator $T_{\sigma}$ on $L^2(\P)$.
\begin{theorem}
	Let $\sigma:\P\times\widehat{\P}\to B(L^2(C_{i}^1))\cup B(L^2(C_{j}^2))$ be an operator valued symbol such that $D_{i}^j\sigma(b,a,\v,\pi_{i}^j)\in S_2(C_{i}^j)$, for all $(b,a,\v)\in\P$. Let $G_{i}^j:\P\to\mathbb{C}$ be the mappings defined by $$G_{i}^j(b,a,\v)=\Delta(b,a,\v)^{-1/2}\|K_{\sigma(b,a,\v,\pi_{i}^j)}\|_{L^2(C_{i}^{j}\times C_{i}^j)}, i,j=1,2,$$ is in $L^2(\P)$, where $K_{\sigma(b,a,\v,\pi_{i}^j)}$ is the kernel of the integral operator $D_{i}^j\sigma(b,a,\v,\pi_{i}^j), i,j=1,2$. Then $T_{\sigma}:L^2(\P)\to L^2(\P)$ is a bounded operator and moreover
	$$\|T_{\sigma}\|_{B(L^2(\P))}\leq \|G_1^1\|_{L^2(\P)}+\|G_2^1\|_{L^2(\P)}+\|G_1^2\|_{L^2(\P)}+\|G_2^2\|_{L^2(\P)}.$$
\end{theorem}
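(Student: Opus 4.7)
The plan is to reduce the estimate to a pointwise trace bound and then integrate, leveraging the Plancherel identity of Theorem \ref{plancherel}. First, note that for each $(b,a,\v)\in\P$ and each $(i,j)$, the operator $D_i^j\sigma(b,a,\v,\pi_i^j)$ lies in $S_2(C_i^j)$ by hypothesis, while $\widehat{f}(\pi_i^j)\in S_2(C_i^j)$ by Theorem \ref{kernelF}, and $\pi_i^j(b,a,\v)^{\ast}$ is unitary and therefore preserves the Hilbert--Schmidt norm. Hence the composition $D_i^j\sigma(b,a,\v,\pi_i^j)\widehat{f}(\pi_i^j)\pi_i^j(b,a,\v)^{\ast}$ is the product of two Hilbert--Schmidt operators and so lies in $S_1(C_i^j)$, which legitimizes every trace appearing in \eqref{pseudo}.

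Next, I would apply the standard trace estimate $|\mathrm{Tr}(AB)|\leq \|A\|_{S_2}\|B\|_{S_2}$ termwise to get
\begin{align*}
\bigl|\mathrm{Tr}\bigl[D_i^j\sigma(b,a,\v,\pi_i^j)\widehat{f}(\pi_i^j)\pi_i^j(b,a,\v)^{\ast}\bigr]\bigr|
&\leq \|D_i^j\sigma(b,a,\v,\pi_i^j)\|_{S_2(C_i^j)}\,\|\widehat{f}(\pi_i^j)\pi_i^j(b,a,\v)^{\ast}\|_{S_2(C_i^j)}\\
&= \|K_{\sigma(b,a,\v,\pi_i^j)}\|_{L^2(C_i^j\times C_i^j)}\,\|\widehat{f}(\pi_i^j)\|_{S_2(C_i^j)},
\end{align*}
using unitarity of $\pi_i^j(b,a,\v)^{\ast}$ to strip it off, and the well-known identification of the Hilbert--Schmidt norm with the $L^2$ norm of the integral kernel. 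Summing over $i,j$ and inserting the prefactor $\Delta(b,a,\v)^{-1/2}$ from the definition \eqref{pseudo} produces the pointwise estimate
$$|(T_\sigma f)(b,a,\v)|\leq \sum_{i,j=1}^{2}G_i^j(b,a,\v)\,\|\widehat{f}(\pi_i^j)\|_{S_2(C_i^j)}.$$

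Now I would take the $L^2(\P)$ norm of both sides with respect to the left Haar measure, apply the triangle inequality, and pull the constants $\|\widehat{f}(\pi_i^j)\|_{S_2}$ (which are independent of $(b,a,\v)$) outside. This yields
$$\|T_\sigma f\|_{L^2(\P)}\leq \sum_{i,j=1}^{2}\|G_i^j\|_{L^2(\P)}\,\|\widehat{f}(\pi_i^j)\|_{S_2(C_i^j)}.$$
Finally, Theorem \ref{plancherel} gives $\|\widehat{f}(\pi_i^j)\|_{S_2(C_i^j)}\leq \|f\|_{L^2(\P)}$ for each $(i,j)$, which yields the claimed operator bound
$$\|T_\sigma\|_{B(L^2(\P))}\leq \|G_1^1\|_{L^2(\P)}+\|G_2^1\|_{L^2(\P)}+\|G_1^2\|_{L^2(\P)}+\|G_2^2\|_{L^2(\P)}.$$

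The only delicate point is the very first one: justifying that the trace in \eqref{pseudo} is well-defined and that the pointwise estimate above actually holds. Once we verify that the product of $D_i^j\sigma$ with $\widehat{f}(\pi_i^j)\pi_i^j(b,a,\v)^{\ast}$ is trace class via the $S_2\cdot S_2\subset S_1$ inclusion, everything else is a routine application of the Cauchy--Schwarz-type trace inequality, the triangle inequality in $L^2(\P)$, and Plancherel. No further structural input is needed.
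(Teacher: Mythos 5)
Your proof is correct and follows essentially the same route as the paper: the trace inequality $|\operatorname{Tr}(AB)|\leq\|A\|_{S_2}\|B\|_{S_2}$, unitarity of $\pi_i^j(b,a,\v)^{\ast}$, the identification of the Hilbert--Schmidt norm with the $L^2$ norm of the kernel, and the Plancherel formula of Theorem \ref{plancherel}. If anything, your ordering (pointwise triangle inequality first, then the $L^2$ norm) is slightly cleaner than the paper's step of pulling the square inside the four-term sum, and both arguments land on the same stated bound.
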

\begin{pff}
	Let $f\in L^2(\P)$. Then using the Plancherel formula \eqref{plancherel} and equation \eqref{pseudo}, we obtain
	\begin{align*}
		&\|T_{\sigma}f\|_{L^2(\P)}\\
		&=\Big(\int_{\P}|(T_{\sigma}f)(b,a,\v)|^2d\mu(b,a,\v)\Big)^{1/2}\\
		&=\Big(\int_{\P}\Delta(b,a,\v)^{-1}\left|\sum_{i,j=1}^2\text{Tr}\Big(D_{i}^j\sigma(b,a,\v,\pi_{i}^j)\widehat{f}(\pi_{i}^j)\pi_{i}^j(b,a,\v)^{\ast}\Big)\right|^2d\mu(b,a,\v)\Big)^{1/2}\\
		&\leq\Big(\int_{\P}\Delta(b,a,\v)^{-1}\sum_{i,j=1}^2\left|\text{Tr}\Big(D_{i}^j\sigma(b,a,\v,\pi_{i}^j)\widehat{f}(\pi_{i}^j)\pi_{i}^j(b,a,\v)^{\ast}\Big)\right|^2d\mu(b,a,\v)\Big)^{1/2}\\
		&\leq  \Big(\int_{\P}\Delta(b,a,\v)^{-1}\sum_{i,j=1}^2\|\widehat{f}(\pi_i^j)\|_{S_2(C_i^j)}^2\|D_i^j\sigma(b,a,\v,\pi_i^j)\|_{S_2(C_i^j)}^2d\mu(b,a,\v)\Big)^{1/2}\\
		&\leq  \sum_{i,j=1}^2\|\widehat{f}(\pi_i^j)\|_{S_2(C_i^j)}\Big(\int_{\P}\Delta(b,a,\v)^{-1}\|D_i^j\sigma(b,a,\v,\pi_i^j)\|_{S_2(C_i^j)}^2d\mu(b,a,\v)\Big)^{1/2}\\
		&=  \sum_{i,j=1}^2\|\widehat{f}(\pi_i^j)\|_{S_2(C_i^j)}\|G_i^j\|_{L^2(\P)}\\
		&\leq  \Big(\sum_{i,j=1}^2\|\widehat{f}(\pi_i^j)\|^2_{S_2(C_i^j)}\Big)^{1/2}\Big(\sum_{i,j=1}^2\|G_i^j\|_{L^2(\P)}^2\Big)^{1/2}\\
		&= \|f\|_{L^2(\P)}\Big(\sum_{i,j=1}^2\|G_i^j\|_{L^2(\P)}^2\Big)^{1/2}.
	\end{align*}
Thus $T_{\sigma}$ is bounded operator from $L^2(\P)$ into $L^2(\P)$ and 
$$\|T_{\sigma}\|_{B(L^2(\P))}\leq \|G_1^1\|_{L^2(\P)}+\|G_2^1\|_{L^2(\P)}+\|G_1^2\|_{L^2(\P)}+\|G_2^2\|_{L^2(\P)}.$$
\end{pff}
\begin{Cor}
	If $D_{i}^j\sigma(b,a,\v,\pi_{i}^j)\in S_p(C_{i}^j),i,j=1,2, 1\leq p<2,$ then $T_{\sigma}:L^2(\P)\to L^2(\P)$ is a bounded linear operator. 
\end{Cor}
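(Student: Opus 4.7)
The plan is to reduce this corollary to the previous theorem by exploiting the standard inclusion of Schatten classes. Recall that for any $1 \leq p \leq q \leq \infty$, one has $S_p(H) \subseteq S_q(H)$ with $\|A\|_{S_q(H)} \leq \|A\|_{S_p(H)}$ for every compact operator $A$ on a Hilbert space $H$. Specializing to $q=2$ and the Hilbert spaces $L^2(C_i^j)$, this gives
\begin{equation*}
\|D_{i}^j\sigma(b,a,\v,\pi_{i}^j)\|_{S_2(C_{i}^j)} \,\leq\, \|D_{i}^j\sigma(b,a,\v,\pi_{i}^j)\|_{S_p(C_{i}^j)},
\end{equation*}
so the hypothesis $D_{i}^j\sigma(b,a,\v,\pi_{i}^j)\in S_p(C_{i}^j)$ already forces $D_{i}^j\sigma(b,a,\v,\pi_{i}^j)\in S_2(C_{i}^j)$ for almost every $(b,a,\v)\in\P$, with the $S_2$-norm pointwise dominated by the $S_p$-norm.

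Next I would translate this operator-norm bound into the kernel condition appearing in the previous theorem. Since every element of $S_2(C_i^j)$ is the integral operator associated with a square-integrable kernel and its Hilbert--Schmidt norm equals the $L^2$ norm of the kernel, we obtain
\begin{equation*}
\|K_{\sigma(b,a,\v,\pi_{i}^j)}\|_{L^2(C_{i}^{j}\times C_{i}^j)} = \|D_{i}^j\sigma(b,a,\v,\pi_{i}^j)\|_{S_2(C_{i}^j)} \leq \|D_{i}^j\sigma(b,a,\v,\pi_{i}^j)\|_{S_p(C_{i}^j)}.
\end{equation*}
Multiplying by $\Delta(b,a,\v)^{-1/2}$ shows that the function $G_i^j$ of the previous theorem is dominated pointwise by the analogous function built from the $S_p$-norm. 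Under the natural integrability hypothesis that these $S_p$-versions lie in $L^2(\P)$, so do the $G_i^j$, and a direct appeal to the previous theorem yields the boundedness of $T_\sigma$ on $L^2(\P)$ together with the corresponding norm estimate.

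The only delicate point, and really the sole obstacle, is making sure that the integrability assumption built into the previous theorem is satisfied; once the pointwise Schatten inclusion is invoked, the rest is automatic from what is already proved. Since the statement of the corollary is phrased as an inclusion of symbol classes, the expected reading is that the hypothesis $D_{i}^j\sigma(b,a,\v,\pi_{i}^j)\in S_p(C_{i}^j)$ is understood together with the natural $L^2(\P)$-integrability in $(b,a,\v)$ of the weighted $S_p$-norm, and under this reading the above reduction is immediate.
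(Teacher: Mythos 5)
Your proposal is correct and matches the argument the paper intends: the corollary is stated without proof immediately after the $L^2$--$L^2$ boundedness theorem, and the implicit justification is exactly the Schatten inclusion $S_p(C_i^j)\subseteq S_2(C_i^j)$ with $\|A\|_{S_2}\leq\|A\|_{S_p}$ for $1\leq p<2$, reducing to that theorem. Your observation that the pointwise membership $D_i^j\sigma(b,a,\v,\pi_i^j)\in S_p(C_i^j)$ alone does not supply the $L^2(\P)$-integrability of the weighted norms $G_i^j$ required by the theorem is a fair criticism of the corollary as literally stated, and your reading of the hypothesis is the natural repair.
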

We obtain $L^2-L^p$ boundedness of the pseudo-differential operator $T_{\sigma}$ from $L^2(\P)$ to $L^p(\P)$.
\begin{theorem}Let $2\leq p\leq\infty$ and $q$ be its conjugate index $(\frac{1}{p}+\frac{1}{q}=1).$
	Let $\sigma:\P\times\widehat{\P}\to B(L^2(C_{i}^1))\cup B(L^2(C_{j}^2))$ be an operator valued symbol such that $D_{i}^j\sigma(b,a,\v,\pi_{i}^j)\in S_q(C_{i}^j)$, for all $(b,a,\v)\in\P$, $i,j=1,2$. Let $G_{i}^j:\P\to\mathbb{C}$ be the mappings defined by $$G_{i}^j(b,a,\v)=\Delta(b,a,\v)^{-1/2}\|D_{i}^j\sigma(b,a,\v,\pi_{i}^j)\|_{S_q(C_{i}^j)}, $$ is in $L^p(\P),  i,j=1,2$. Then $T_{\sigma}:L^2(\P)\to L^p(\P)$ is a bounded operator, and
	$$\|T_{\sigma}\|_{B(L^2(\P),L^p(\P))}\leq \left(\|G_1^1\|_{L^p(\P)}+\|G_2^1\|_{L^p(\P)}+\|G_1^2\|_{L^p(\P)}+\|G_2^2\|_{L^p(\P)}\right).$$
\end{theorem}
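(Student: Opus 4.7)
The plan is to mimic the proof of the previous $L^2$--$L^2$ boundedness theorem, but replace the Cauchy--Schwarz step on Schatten $S_2$--$S_2$ by the Schatten H\"older inequality with the conjugate pair $S_q$--$S_p$, and then exploit the Schatten inclusion $S_2\subset S_p$ available because $p\geq 2$.

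Starting from the defining formula
\[
(T_{\sigma}f)(b,a,\v)=\Delta(b,a,\v)^{-1/2}\sum_{i,j=1}^2\text{Tr}\Big[D_{i}^j\sigma(b,a,\v,\pi_{i}^j)\widehat{f}(\pi_{i}^j)\pi_{i}^j(b,a,\v)^{\ast}\Big],
\]
I would apply the trace bound $|\text{Tr}(AB)|\leq \|A\|_{S_q(C_i^j)}\|B\|_{S_p(C_i^j)}$ with $A=D_i^j\sigma(b,a,\v,\pi_i^j)$ and $B=\widehat{f}(\pi_i^j)\pi_i^j(b,a,\v)^{\ast}$. Since $\pi_i^j(b,a,\v)^{\ast}$ is unitary on $L^2(C_i^j)$, the Schatten norm is preserved: $\|B\|_{S_p(C_i^j)}=\|\widehat{f}(\pi_i^j)\|_{S_p(C_i^j)}$. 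Because $p\geq 2$, the inclusion $S_2(C_i^j)\hookrightarrow S_p(C_i^j)$ with $\|\cdot\|_{S_p}\leq \|\cdot\|_{S_2}$ upgrades this to $\|\widehat{f}(\pi_i^j)\|_{S_p(C_i^j)}\leq \|\widehat{f}(\pi_i^j)\|_{S_2(C_i^j)}$. Combining these with the triangle inequality over $i,j$ yields the pointwise estimate
\[
|(T_{\sigma}f)(b,a,\v)|\leq \sum_{i,j=1}^2 G_i^j(b,a,\v)\,\|\widehat{f}(\pi_i^j)\|_{S_2(C_i^j)}.
\]

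Taking $L^p(\P)$-norms and applying Minkowski's inequality for the finite sum gives
\[
\|T_{\sigma}f\|_{L^p(\P)}\leq \sum_{i,j=1}^2 \|G_i^j\|_{L^p(\P)}\,\|\widehat{f}(\pi_i^j)\|_{S_2(C_i^j)}.
\]
Finally, the Plancherel identity (Theorem \ref{plancherel}) gives $\|\widehat{f}(\pi_i^j)\|_{S_2(C_i^j)}\leq \|f\|_{L^2(\P)}$ for each $i,j$, which after factoring yields the announced operator-norm bound.

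The one nontrivial ingredient is the Schatten-H\"older step together with the inclusion $S_2\subset S_p$; everything else is bookkeeping parallel to the preceding theorem. The endpoint $p=2$, $q=2$ recovers (a slightly cleaner version of) the $L^2$-boundedness theorem, while $p=\infty$, $q=1$ yields an $L^2$--$L^\infty$ estimate in which $G_i^j$ only needs to be essentially bounded and $D_i^j\sigma(b,a,\v,\pi_i^j)$ is required to be trace class.
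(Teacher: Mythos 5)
Your proposal is correct and follows essentially the same route as the paper: the Schatten--H\"older trace bound with the conjugate pair $(q,p)$, unitary invariance of the Schatten norm under $\pi_i^j(b,a,\vartheta)^{\ast}$, the embedding $S_2\subset S_p$ for $p\geq 2$, Minkowski's inequality over the four representations, and the Plancherel formula. The only (cosmetic) difference is that by establishing the pointwise estimate first you handle $p=\infty$ uniformly, whereas the paper treats that endpoint as a separate case.
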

\begin{pff}
	Let $f\in L^2(\P)$, and $2\leq p<\infty$. Then by the H\"older inequality, Plancherel formula \eqref{plancherel} and equation \eqref{pseudo}, we get
	\begin{align}\label{pbounded}
		&\|T_{\sigma}f\|_{L^p(\P)}\nonumber\\
		=&\Big(\int_{\P}|(T_{\sigma}f)(b,a,\v)|^pd\mu(b,a,\v)\Big)^{1/p}\nonumber\\
		=&\Big(\int_{\P}\Delta(b,a,\v)^{-p/2}\left|\sum_{i,j=1}^2\text{Tr}\Big(D_{i}^j\sigma(b,a,\v,\pi_{i}^j)\widehat{f}(\pi_{i}^j)\pi_{i}^j(b,a,\v)^{\ast}\Big)\right|^pd\mu(b,a,\v)\Big)^{1/p}\nonumber\\
		\leq &\Big(\int_{\P}\Delta(b,a,\v)^{-p/2}\sum_{i,j=1}^2\left|\text{Tr}\Big(D_{i}^j\sigma(b,a,\v,\pi_{i}^j)\widehat{f}(\pi_{i}^j)\pi_{i}^j(b,a,\v)^{\ast}\Big)\right|^pd\mu(b,a,\v)\Big)^{1/p}\nonumber\\
		\leq & \Big(\int_{\P}\Delta(b,a,\v)^{-p/2}\sum_{i,j=1}^2\|\widehat{f}(\pi_i^j)\|_{S_p(C_i^j)}^p\|D_i^j\sigma(b,a,\v,\pi_i^j)\|_{S_q(C_i^j)}^pd\mu(b,a,\v)\Big)^{1/p}\nonumber\\
		\leq & \sum_{i,j=1}^2\|\widehat{f}(\pi_i^j)\|_{S_p(C_i^j)}\Big(\int_{\P}\Delta(b,a,\v)^{-p/2}\|D_i^j\sigma(b,a,\v,\pi_i^j)\|_{S_q(C_i^j)}^pd\mu(b,a,\v)\Big)^{1/p}\nonumber\\
		\leq & \sum_{i,j=1}^2\|\widehat{f}(\pi_i^j)\|_{S_2(C_i^j)}\|G_i^j\|_{L^p(\P)}\nonumber\\
		\leq & \Big(\sum_{i,j=1}^2\|\widehat{f}(\pi_i^j)\|^2_{S_2(C_i^j)}\Big)^{1/2}\Big(\sum_{i,j=1}^2\|G_i^j\|_{L^p(\P)}^2\Big)^{1/2}\nonumber\\
		=& \|f\|_{L^2(\P)}\Big(\sum_{i,j=1}^2\|G_i^j\|_{L^p(\P)}^2\Big)^{1/2}.
	\end{align}
For $p=\infty$, using the fact $S_2(C_i^j)\subset S_{\infty}(C_i^j), i,j=1,2,$ and H\"older inequality, we get
\begin{align}\label{infintybounded}
	&\|T_{\sigma}f\|_{L^{\infty}(\P)}=\esssup\limits_{(b,a,\v)\in\P}|T_{\sigma}f(b,a,\v)|\nonumber\\
	&=\esssup\limits_{(b,a,\v)\in\P}\left|\Delta(b,a,\v)^{-1/2}\sum_{i,j=1}^2\text{Tr}\Big[D_{i}^j\sigma(b,a,\v,\pi_{i}^j)\widehat{f}(\pi_{i}^j)\pi_{i}^j(b,a,\v)^{\ast}\Big]\right|\nonumber\\
	&\leq\esssup\limits_{(b,a,\v)\in\P}\Delta(b,a,\v)^{-1/2}\sum_{i,j=1}^2\left|\text{Tr}\Big[D_{i}^j\sigma(b,a,\v,\pi_{i}^j)\widehat{f}(\pi_{i}^j)\pi_{i}^j(b,a,\v)^{\ast}\Big]\right|\nonumber\\
	&\leq\esssup\limits_{(b,a,\v)\in\P}\Delta(b,a,\v)^{-1/2}\sum_{i,j=1}^2\|D_i^j\sigma(b,a,\v,\pi_i^j)\|_{S_1(C_i^j)}\|\widehat{f}(\pi_i^j)\|_{S_{\infty}(C_i^j)}\nonumber\\
	&=\esssup\limits_{(b,a,\v)\in\P}\sum_{i,j=1}^2|G_i^j(b,a,\v)| \|\widehat{f}(\pi_i^j)\|_{S_{\infty}(C_i^j)}\nonumber\\
	&=\sum_{i,j=1}^2\|G_i^j\|_{L^{\infty}(\P)}\|\widehat{f}(\pi_i^j)\|_{S_{\infty}(C_i^j)}\nonumber\\
	&\leq \sum_{i,j=1}^2\|G_i^j\|_{L^{\infty}(\P)}\|\widehat{f}(\pi_i^j)\|_{S_2(C_i^j)}\nonumber\\
	&\leq \Big(\sum_{i,j=1}^2\|G_i^j\|_{L^{\infty}(\P)}^2\Big)^{1/2}\Big(\sum_{i,j=1}^2\|\widehat{f}(\pi_i^j)\|_{S_2(C_i^j)}^2\Big)^{1/2}\nonumber\\
	&= \|f\|_{L^2(\P)}\Big(\sum_{i,j=1}^2\|G_i^j\|_{L^{\infty}(\P)}^2\Big)^{1/2}.
\end{align}
From \eqref{pbounded} and \eqref{infintybounded}, we have
 $$\|T_{\sigma}f\|_{L^p(\P)}\leq \|f\|_{L^2(\P)}\Big(\sum_{i,j=1}^2\|G_i^j\|_{L^{p}(\P)}^2\Big)^{1/2}, \quad 2\leq p\leq\infty.$$
\end{pff}

\begin{theorem}[Uniqueness Theorem]\label{uniqueness1}
	
Let $\sigma :{\P}\times\{ \pi_{i}^{j} \} \to S_2$ be an operator valued symbol such that $$\sum_{i,j=1}^2\int_{\P} \|\sigma(b,a,\v,\pi_{i}^j)\|_{S_2}^2\dfrac{dbdad\v}{a^3}<\infty ,$$ and the mapping
\begin{equation}\label{weakly}
	\P\times\widehat{\P}\owns (b,a,\v,\pi_i^j)\mapsto \pi_i^j(b,a,\v)^{\ast}D_i^j\sigma(b,a,\v,\pi_i^j)\in S_2
\end{equation} is weakly continuous. Then $T_{\sigma}f=0$ for all $f\in L^2(\P)$ iff $\sigma(b,a,\v,\pi_i^j)=0$ for almost all $(b,a,\v)$ in $\P$.	
\end{theorem}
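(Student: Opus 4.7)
The reverse implication is immediate from the defining formula \eqref{pseudo}. For the forward direction, my plan is to use the cyclicity of the trace to rewrite
$$(T_\sigma f)(b,a,\v) = \Delta(b,a,\v)^{-1/2} \sum_{i,j=1}^{2} \text{Tr}\bigl[\pi_i^j(b,a,\v)^{\ast} D_i^j \sigma(b,a,\v,\pi_i^j)\, \widehat{f}(\pi_i^j)\bigr],$$
so that the hypothesis $T_\sigma f \equiv 0$ becomes
$$\sum_{i,j=1}^{2} \text{Tr}\bigl[\pi_i^j(b,a,\v)^{\ast} D_i^j \sigma(b,a,\v,\pi_i^j)\, \widehat{f}(\pi_i^j)\bigr] = 0 \quad \text{for a.e.~} (b,a,\v) \in \P,$$
for every $f \in L^2(\P)$. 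The assumption that $\pi_i^j(b,a,\v)^{\ast} D_i^j \sigma(b,a,\v,\pi_i^j)$ lies in $S_2$ makes this a legitimate pairing against the Hilbert--Schmidt operator $\widehat{f}(\pi_i^j)$.

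Next I would invoke the weak continuity hypothesis \eqref{weakly}, combined with the fact that the trace against a fixed Hilbert--Schmidt operator is a continuous linear functional on $S_2$ in its weak topology, to promote the above identity from ``a.e.''\ to ``everywhere''. The key step then is to exploit the Plancherel isomorphism of Theorem \ref{plancherel}, namely that $f \mapsto (\widehat{f}(\pi_i^j))_{i,j}$ is a unitary map from $L^2(\P)$ onto $\bigoplus_{i,j} S_2(C_i^j)$. Hence for any fixed $(i_0,j_0)$ and any prescribed $S \in S_2(C_{i_0}^{j_0})$, there is an $f \in L^2(\P)$ for which $\widehat{f}(\pi_{i_0}^{j_0}) = S$ while $\widehat{f}(\pi_i^j) = 0$ for $(i,j) \neq (i_0,j_0)$. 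Substituting such $f$ collapses the sum to
$$\text{Tr}\bigl[\pi_{i_0}^{j_0}(b,a,\v)^{\ast} D_{i_0}^{j_0} \sigma(b,a,\v,\pi_{i_0}^{j_0})\, S\bigr] = 0 \quad \text{for every } S \in S_2(C_{i_0}^{j_0}),$$
and non-degeneracy of the Hilbert--Schmidt trace pairing then forces $\pi_{i_0}^{j_0}(b,a,\v)^{\ast} D_{i_0}^{j_0} \sigma(b,a,\v,\pi_{i_0}^{j_0}) = 0$. Since $\pi_{i_0}^{j_0}(b,a,\v)^{\ast}$ is unitary and $D_{i_0}^{j_0}$ is multiplication by the almost-everywhere non-vanishing function $\tfrac{1}{2\pi}|\langle x;x\rangle|^{1/2}$ on $C_{i_0}^{j_0}$, both factors are injective, and cancellation gives $\sigma(b,a,\v,\pi_{i_0}^{j_0}) = 0$ at every $(b,a,\v)$.

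The main obstacle I anticipate is the careful handling of the unbounded Duflo--Moore operator $D_i^j$: both the cyclic shift of the trace and the final cancellation rest on domain considerations that are rendered tractable only by the standing assumption that $\pi_i^j(b,a,\v)^{\ast} D_i^j \sigma(b,a,\v,\pi_i^j)$ is genuinely $S_2$-valued and weakly continuous. A secondary technicality is ensuring that the Plancherel map is surjective (not merely isometric) onto $\bigoplus_{i,j} S_2(C_i^j)$, so that $f$ with the prescribed one-representation Fourier data actually exists; this follows from the abstract Plancherel theorem for type~I non-unimodular groups applied in the present square-integrable setting.
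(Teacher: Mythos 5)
Your proposal is correct and follows essentially the same route as the paper: the paper likewise uses surjectivity of the Plancherel transform to prescribe $\widehat{f}(\pi_i^j)$, uses the weak continuity hypothesis to upgrade the a.e.\ vanishing of $T_{\sigma}f$ to everywhere, and then concludes via positivity/injectivity of $D_i^j$. The only cosmetic difference is that the paper tests against the single operator $\big(D_i^j\sigma(b',a',\v',\pi_i^j)\big)^{\ast}\pi_i^j(b',a',\v')$, which turns the trace into $\|D_i^j\sigma\|_{S_2}^2$ directly, whereas you test against arbitrary $S\in S_2$ and invoke non-degeneracy of the trace pairing.
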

\begin{pff}
	Let the symbol $\sigma$ be zero. Then by the definition of pseudo-differential operator \eqref{pseudo}, $T_{\sigma}f=0$ for all $f$ in $L^2(\P)$.
	
	 Conversely, suppose that $(T_{\sigma}f)(b,a,\v)=0$ for all $f\in L^2(\P)$. Let $(b^{\prime},a^{\prime},\v^{\prime})$ in $\P$. Then we define the function $f_{b^{\prime},a^{\prime},\v^{\prime}}$ in $L^2(\P)$ such that $$\widehat{f_{b^{\prime},a^{\prime},\v^{\prime}}}(\pi_i^j)=\big(D_i^j\sigma(b^{\prime},a^{\prime},\v^{\prime},\pi_i^j)\big)^{\ast}\pi_i^j(b^{\prime},a^{\prime},\v^{\prime}).$$ Then the pseudo-differential operator becomes
	\begin{equation}\label{uniqueness}
		(T_{\sigma}f_{b^{\prime},a^{\prime},\v^{\prime}})(b,a,\v)
		=\frac{1}{a}\sum_{i,j=1}^2 \text{Tr}\Big[D_i^j\sigma(b,a,\v,\pi_i^j)(D_i^j\sigma(b^{\prime},a^{\prime},\v^{\prime},\pi_i^j))^{\ast}\pi_i^j(b^{\prime},a^{\prime},\v^{\prime})\pi_i^j(b,a,\v)^{\ast}\Big].
	\end{equation} Let $(b_0,a_0,\v_0)$ in $\P$. Then by weak continuity of the mapping \eqref{weakly}, we get
	\begin{align*}
		&\text{Tr}\Big[D_i^j\sigma(b,a,\v,\pi_i^j)(D_i^j\sigma(b^{\prime},a^{\prime},\v^{\prime},\pi_i^j))^{\ast}\pi_i^j(b^{\prime},a^{\prime},\v^{\prime})\pi_i^j(b,a,\v)^{\ast}\Big]\\
		&\to \text{Tr}\Big[D_i^j\sigma(b_0,a_0,\v_0,\pi_i^j)(D_i^j\sigma(b^{\prime},a^{\prime},\v^{\prime},\pi_i^j))^{\ast}\pi_i^j(b^{\prime},a^{\prime},\v^{\prime})\pi_i^j(b_0,a_0,\v_0)^{\ast}\Big],
	\end{align*}
as $(b,a,\v)\to (b_0,a_0,\v_0)$.
Hence the equation \eqref{uniqueness} becomes 
$$T_{\sigma}f_{b^{\prime},a^{\prime},\v^{\prime}}(b,a,\v)\to T_{\sigma}f_{b^{\prime},a^{\prime},\v^{\prime}}(b_0,a_0,\v_0),$$ as $(b,a,\v)\to (b_0,a_0,\v_0)$, and hence $T_{\sigma}f_{b^{\prime},a^{\prime},\v^{\prime}}(b,a,\v)$ is a continuous function on $\P$. Letting $(b_0,a_0,\v_0)=(b^{\prime},a^{\prime},\v^{\prime})$, we obtain
\begin{align*}
	T_{\sigma}f_{b^{\prime},a^{\prime},\v^{\prime}}(b^{\prime},a^{\prime},\v^{\prime})&= \dfrac{1}{a^2}\sum\limits_{i,j=1}^2\|D_i^j\sigma(b^{\prime},a^{\prime},\v^{\prime},\pi_i^j)\|_{S_2(C_i^j)}=0.
\end{align*}
Therefore $\|D_i^j\sigma(b^{\prime},a^{\prime},\v^{\prime},\pi_i^j)\|_{S_2(C_i^j)}=0,$ for all $i,j=1,2,$. Since $D_i^j$ are positive operators, hence $\sigma(b^{\prime},a^{\prime},\v^{\prime},\pi_i^j)=0$ for almost $(b^{\prime},a^{\prime},\v^{\prime})\in \P.$
\end{pff}

Using the Theorem \ref{plancherel} and corollary \ref{WeylF}, we can define  $\widehat{f}(\pi_i^j)=W_{\sigma_f}^{i,j}, \quad i,j=1,2.$

The following theorem characterizes Hilbert-Schmidt pseudo-differential operators on $L^2(\P)$ under some suitable operator valued symbol.
\begin{theorem}[Hilbert-Schmidt pseudo-differential Operator]\label{Hilbert-Schmidt}
	Let $\sigma:\P\times\P\to S_2$ be an operator valued symbol such that the hypotheses of Theorem \ref{uniqueness1} is satisfied. Then the associated pseudo-differential operator $T_{\sigma}:L^2(\P)\to L^2(\P)$ is a Hilbert-Schmidt operator iff
	$$D_i^j\sigma(b,a,\v,\pi_i^j)\pi_i^j(b,a,\v)^{\ast}=W_{\tau_{\alpha(b,a,\v)}},\quad ,i,j=1,2,$$ for all $(b,a,\v)\in\P$, where 
	$$\tau_{\alpha(b,a,\v)}(x,\xi)=\F_2^{-1}TK_{\alpha(b,a,\v)}(x,\tilde{\xi}),\quad\tilde{\xi}=(-\xi_1,\xi_2),$$ $T$ is a twisting operator, 
$$K_{\alpha(b,a,\v)}(x,{\xi})=\dfrac{a}{2\pi}\times\dfrac{|\langle x,x\rangle|^{\frac{1}{2}}}{|\langle \xi;\xi\rangle|}\times (\F_1^{-1}\alpha(b,a,\v))\Bigg(x,\Bigg(\dfrac{\langle \xi;\xi\rangle}{\langle x;x\rangle}\Bigg)^{\frac{1}{2}}, \cosh^{-1}\Bigg(\dfrac{\langle x;\xi\rangle}{ \langle x;x\rangle}\times \Bigg(\dfrac{\langle x;x\rangle}{\langle \xi;\xi\rangle}\Bigg)^{\frac{1}{2}}\Bigg),$$ for all $(x,\xi)\in C_i^j\times C_i^j$, $i,j=1,2$, and $\alpha:\P\to L^2(\P)$ is the weakly continuous map such that $$\int_P\|\alpha(b,a,\v)\|_{L^2(\P)}^2\dfrac{dbdad\v}{a^3}<\infty.$$
 	
\end{theorem}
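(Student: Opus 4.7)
The plan is to characterize the Hilbert--Schmidt property of $T_\sigma$ through $L^2$-integrability of its Schwartz kernel on $\P\times\P$, then transport this condition to the symbol side using the Plancherel isometry (Theorem \ref{plancherel}) and the Weyl-transform identification (Corollary \ref{WeylF}).

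First, using cyclicity of the trace, I would rewrite
$$T_\sigma f(g) = \Delta(g)^{-1/2}\sum_{i,j=1}^{2}\text{Tr}\bigl[\widehat{f}(\pi_i^j)\pi_i^j(g)^{\ast}D_i^j\sigma(g,\pi_i^j)\bigr]$$
for $g=(b,a,\v)$, so that $\widehat{f}(\pi_i^j)$ is paired against $\pi_i^j(g)^{\ast}D_i^j\sigma(g,\pi_i^j)$ through the inner product \eqref{Plancherel} on $L^2(\widehat{\P},S_2)$. This gives $T_\sigma f(g)=\Delta(g)^{-1/2}\langle\widehat{f},\widehat{h_g}\rangle$ with $\widehat{h_g}(\pi_i^j)=\sigma(g,\pi_i^j)^{\ast}D_i^j\pi_i^j(g)$, and Theorem \ref{plancherel} produces a unique $h_g\in L^2(\P)$ with $T_\sigma f(g)=\Delta(g)^{-1/2}\langle f,h_g\rangle_{L^2(\P)}$. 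Consequently, the Schwartz kernel of $T_\sigma$ is $K(g,g')=\Delta(g)^{-1/2}\overline{h_g(g')}$, and the Hilbert--Schmidt criterion $\|K\|_{L^2(\P\times\P)}^2<\infty$ translates, via Plancherel and unitarity of $\pi_i^j(g)$, into
$$\int_{\P}\Delta(g)^{-1}\sum_{i,j=1}^{2}\bigl\|D_i^j\sigma(g,\pi_i^j)\pi_i^j(g)^{\ast}\bigr\|_{S_2}^{2}\,d\mu(g)<\infty.$$

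Next I would introduce $\alpha:\P\to L^2(\P)$ as the unique $L^2$-function whose group Fourier transform encodes the operators $D_i^j\sigma(g,\pi_i^j)\pi_i^j(g)^{\ast}$ on each cone, absorbing the modular factor $\Delta(g)^{-1/2}=a$ so that the Plancherel identity turns the displayed HS criterion into the stated integrability $\int_{\P}\|\alpha(g)\|_{L^2(\P)}^{2}\,db\,da\,d\v/a^3<\infty$. Existence and uniqueness of $\alpha(g)$ follow directly from the Plancherel isomorphism of Theorem \ref{plancherel}, and weak continuity of $\alpha$ is inherited from the hypothesis of Theorem \ref{uniqueness1}. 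To identify $D_i^j\sigma(g,\pi_i^j)\pi_i^j(g)^{\ast}$ with the Weyl transform $W_{\tau_{\alpha(g)}}$, I would apply Corollary \ref{WeylF} to $\alpha(g)$, substitute the explicit kernel from Theorem \ref{kernelF}, and then push the factor $a$ and the unitary $\pi_i^j(g)^{\ast}$ through the change of variables underlying the kernel formula; the adjoints convert $\F_1$ into $\F_1^{-1}$ inside $K_{\alpha(g)}$ and $\F_2$ into $\F_2^{-1}$ inside the Weyl symbol, producing exactly the formulas for $K_{\alpha(g)}$ and $\tau_{\alpha(g)}$ stated in the theorem (the additional $a/(2\pi)$ traces back to the dilation part of $\pi_i^j(g)^{\ast}$). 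Reading the chain of equivalences backwards yields the sufficiency direction.

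The principal technical obstacle I anticipate is the meticulous bookkeeping needed to match the precise formula for $\tau_{\alpha(g)}$: one must simultaneously keep track of the modular weight $\Delta(g)^{-1/2}$, the self-adjoint Duflo--Moore operator $D_i^j$, the adjoints of the unitaries $\pi_i^j(g)^{\ast}$ (which shift the indicated Fourier transforms to their inverses), and the precise constants $a/(2\pi)$ and $|\langle x;x\rangle|^{1/2}/|\langle\xi;\xi\rangle|$ appearing in $K_{\alpha(g)}$. Once these normalizations are correctly laid out, the characterization follows directly from the Plancherel theorem and the Weyl-transform correspondence of Corollary \ref{WeylF}.
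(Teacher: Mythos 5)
Your overall architecture is the same as the paper's: both proofs reduce the Hilbert--Schmidt property to square-integrability of the Schwartz kernel of $T_\sigma$ over $\P\times\P$, identify that kernel with a family $\alpha(b,a,\v)\in L^2(\P)$ via the Plancherel isometry of Theorem \ref{plancherel}, and translate back to the symbol through the Weyl correspondence of Corollary \ref{WeylF}. Where you differ is in execution: the paper's sufficiency direction is a direct computation of $\mathrm{Tr}\bigl(W^{i,j}_{\sigma_f}W_{\tau_{\alpha(b,a,\v)}}\bigr)$ using the product-trace formula for Weyl transforms, $\mathrm{Tr}(W_\sigma W_\tau)=\int\!\!\int\sigma\tau\,dx\,d\xi$, and for the necessity it takes the $L^2$ kernel $\beta$ of the given Hilbert--Schmidt operator, sets $\alpha(b,a,\v)=\beta(b,a,\v,\cdot)$, reverses the computation, and then invokes the uniqueness Theorem \ref{uniqueness1} to extract the operator identity $\pi_i^j(b,a,\v)^{\ast}D_i^j\sigma(b,a,\v,\pi_i^j)=W_{\tau_{\alpha(b,a,\v)}}$ from the equality of the two trace expressions for all $f$. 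Your abstract pairing $T_\sigma f(g)=\Delta(g)^{-1/2}\langle f,h_g\rangle$ is a legitimate and arguably cleaner way to get the kernel and the $L^2$ criterion, and your chain of equivalences would let you bypass the uniqueness theorem, but you should still say explicitly that you are using surjectivity of the Plancherel map (which Theorem \ref{plancherel} states only as an isometry) to produce $h_g$ and then $\alpha(g)$.

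The genuine weak point is the step you yourself flag as the crux and then resolve incorrectly: the claim that ``the adjoints convert $\F_1$ into $\F_1^{-1}$ inside $K_{\alpha(b,a,\v)}$ and $\F_2$ into $\F_2^{-1}$ inside the Weyl symbol.'' That is not the mechanism. The theorem identifies $D_i^j\sigma(g,\pi_i^j)\pi_i^j(g)^{\ast}$ itself, not its adjoint, with $W_{\tau_{\alpha(g)}}$; and taking the adjoint of an integral operator replaces the kernel $K(x,y)$ by $\overline{K(y,x)}$, which turns $\F_1\alpha$ into $\F_1^{-1}\overline{\alpha}$ with transposed arguments, not into $\F_1^{-1}\alpha$. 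In the paper the inverse transforms enter for a different reason: the trace is evaluated through the \emph{bilinear} (unconjugated) identities $\mathrm{Tr}(W_\sigma W_\tau)=\int\!\!\int\sigma\tau$ and $\int(\F_2 u)(\F_2^{-1}v)=\int uv$, so $\tau_{\alpha(g)}$ is built with $\F_2^{-1}$ and $\F_1^{-1}$ precisely so that the pairing against $\sigma_f$ collapses to $\int f\,\alpha(g)\,d\mu$. Your setup, by contrast, uses the sesquilinear pairing $\mathrm{Tr}(AB^{\ast})=\langle A,B\rangle_{S_2}$, which puts a complex conjugate on $h_g$ in the kernel $K(g,g')=\Delta(g)^{-1/2}\overline{h_g(g')}$ that is absent from the paper's $\alpha(g)(g')$; if you carry your bookkeeping through literally you will land on a reflected/conjugated variant of $\tau_{\alpha(g)}$ rather than the stated formula. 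The $L^2$ criterion and hence the Hilbert--Schmidt characterization survive (conjugation and reflection preserve all the norms involved), but to prove the theorem \emph{as stated} you must either switch to the bilinear pairing as the paper does or explicitly track how your $\alpha$ relates to the one in the statement.
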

\begin{pff} 
We know that the pseudo-differential operator $T_{\sigma}$ on $C_0^{\infty}(\P)$ is defined by
\begin{equation}\label{Hilbert} (T_{\sigma}f)(b,a,\v)=\Delta(b,a,\v)^{-1/2}\sum_{i,j=1}^2\text{Tr}\Big(D_{i}^j\sigma(b,a,\v,\pi_{i}^j)\widehat{f}(\pi_{i}^j)\pi_{i}^j(b,a,\v)^{\ast}\Big),
\end{equation} for all $(b,a,\v)\in\P$. Now using the fact that $T$ is an unitary operator and Parseval identity for $\F$, we obtain	
\begin{align*}
&\text{Tr}\Big(D_{i}^j\sigma(b,a,\v,\pi_{i}^j)\widehat{f}(\pi_{i}^j)\pi_{i}^j(b,a,\v)^{\ast}\Big)\\
&=\text{Tr}\Big(W_{\sigma_{f}}^{i,j}W_{\tau_{\alpha(b,a,\v)}}\Big)\\
&= 	\int_{\R^2}\int_{\R^2} \sigma_{f}(x,{\xi})\tau_{\alpha(b,a,\v)}(x,{\xi})dxd\xi\\
&=(2\pi)\int_{\R^2}\int_{\R^2}\F_2TK_{i,j}^f(x,\tilde{\xi})\F_2^{-1}TK_{\alpha(b,a,\v)}(x,\tilde{\xi})dxd\xi\\
& =(2\pi)\int_{\R^2}\int_{\R^2}K_{i,j}^f(x,{\xi})K_{\alpha(b,a,\v)}(x,{\xi})dxd\xi\\
&=(2\pi) \int_{C_i^j}\int_{C_{i}^j} \dfrac{|\langle x,x\rangle|}{|\langle \xi;\xi\rangle|^2}(\F_1f)\Bigg(x,\Bigg(\dfrac{\langle \xi;\xi\rangle}{\langle x;x\rangle}\Bigg)^{\frac{1}{2}}, \cosh^{-1}\Bigg(\dfrac{\langle x;\xi\rangle}{ \langle x;x\rangle}\times \Bigg(\dfrac{\langle x;x\rangle}{\langle \xi;\xi\rangle}\Bigg)^{\frac{1}{2}}\Bigg)\\
&\times (\F_1^{-1}\alpha(b,a,\v))\Bigg(x,\Bigg(\dfrac{\langle \xi;\xi\rangle}{\langle x;x\rangle}\Bigg)^{\frac{1}{2}}, \cosh^{-1}\Bigg(\dfrac{\langle x;\xi\rangle}{ \langle x;x\rangle}\times \Bigg(\dfrac{\langle x;x\rangle}{\langle \xi;\xi\rangle}\Bigg)^{\frac{1}{2}}\Bigg)dxd\xi\\
&=(2\pi)\int_{C_{i}^j}\int_0^{\infty}\int_{\R} (\mathcal{F}_1f)(x,c,t)(\F_1^{-1}\alpha(b,a,\v))(x,c,t)\dfrac{dxdcdt}{c^{3}},
\end{align*}
for all $(b,a,\v)\P$. Using the Parseval identity for $\F$, the equation \eqref{Hilbert} becomes 
\begin{align*}
	(T_{\sigma}f)(b,a,\v)&=\dfrac{2\pi}{a}\sum_{i,j=1}^2\int_{C_{i}^j}\int_0^{\infty}\int_{\R}(\mathcal{F}_1f)(x,c,t)(\F_1^{-1}\alpha(b,a,\v))(x,c,t)\dfrac{dxdcdt}{c^{ 3}}\\
	&=\dfrac{2\pi}{a}\int_{\R^2}\int_0^{\infty}\int_{\R}(\mathcal{F}_1f)(x,c,t)(\F_1^{-1}\alpha(b,a,\v))(x,c,t)\dfrac{dxdcdt}{c^{ 3}}\\
	&=\dfrac{2\pi}{a}\int_{\R^2}\int_0^{\infty}\int_{\R}f(x,c,t)\alpha(b,a,\v)(x,c,t)\dfrac{dxdcdt}{c^{3}}.
\end{align*}
So the kernel $k$ of $T_{\sigma}$ is given by
$$k(b,a,\v;x,a^{\prime},\v^{\prime})=\alpha (b,a,\v)(x,a^{\prime},\v^{\prime}).$$ Now
\begin{align*}
	\int_{\P}\int_{\P}|k(b,a,\v;x,c,t)|^2\dfrac{dbdad\v}{a^3}\dfrac{dxdcdt}{c^{3}}
	&= \int_{\P}\int_{\P}|\alpha (b,a,\v)(x,c,t)|^2\dfrac{dbdad\v}{a^3}\dfrac{dxdcdt}{c^{3}}\\
	 &=\int_{\P}\|\alpha(b,a,\v)\|_{L^2(\P)}^2\dfrac{dbdad\v}{a^3}<\infty.
\end{align*}
Thus $T_{\sigma}:L^2(\P)\to L^2(\P)$ is a Hilbert-Schmidt pseudo-differential operator.

 Conversely, let $T_{\sigma}$ be a Hilbert-Schmidt pseudo-differential operator from $L^2(\P)$ to $L^2(\P)$. Then $$(T_{\sigma}f)(b,a,\v)=\int_{\P}\beta(b,a,\v,x,c,t)f(x,c,t)\dfrac{dxdcdt}{c^3},$$ where the kernel  $\beta$ is in $L^2(\P\times\P)$. Let $\alpha:\P\to L^2(\P)$ be the mapping define by
$$\alpha(b,a,\v)(x,c,t)=\beta(b,a,\v,x,c,t).$$
Now reversing the arguments in the proof of the sufficiency part, we get
\begin{equation}\label{eq1}
	(T_{\sigma}f)(b,a,\v)=\dfrac{1}{a}\sum_{i,j=1}^2\text{Tr}[W_{\sigma_{f}}^{i,j}W_{\tau_{\alpha(b,a,\v)}}],
\end{equation}
with $\tau_{\alpha_{(b,a,\v)}}(x,{\xi})=\F_2^{-1}TK_{\alpha_{(b,a,\v)}}(x,\tilde{\xi}), (x,\xi)\in C_i^j\times C_i^j.$ But any pseudo-differential operator on $\P$ is of the form 
\begin{equation}\label{eq2}
(T_{\sigma}f)(b,a,\v)=\Delta(b,a,\v)^{-1/2}\sum_{i,j=1}^2\text{Tr}\Big(D_{i}^j\sigma(b,a,\v,\pi_{i}^j)\widehat{f}(\pi_{i}^j)\pi_{i}^j(b,a,\v)^{\ast}\Big),	
\end{equation}
and $f(\pi_i^j)=W_{\sigma_{f}}^{i,j}, i,j=1,2.$
By equating both equations \eqref{eq1} and \eqref{eq2}, we get
$$\dfrac{1}{a}\sum_{i,j=1}^2\text{Tr}\Big[\big(\pi_i^j(b,a,\v)^{\ast}D_i^j\sigma(b,a,\v,\pi_i^j)-W_{\tau_{\alpha_{(b,a,\v)}}}\big)\widehat{f}(\pi_i^j)\Big]=0,$$ for all $(b,a,\v)$ in $\P$. Hence by uniqueness Theorem \ref{uniqueness1}, we obtain
$$\pi_i^j(b,a,\v)^{\ast}D_i^j\sigma(b,a,\v,\pi_i^j)=W_{\tau_{\alpha_{(b,a,\v)}}}.$$
\end{pff}

An immediate consequence of Theorem \ref{Hilbert-Schmidt}, we have the following result.
\begin{Cor}
If $\alpha:\P\to L^2(\P)$ such that 
$$\int_{\P}|\alpha(b,a,\v,b,a,\v)|\dfrac{dbdad\v}{a^3}<\infty.$$ Let $\sigma:\P\times\P\to S_2$ be a symbol as in Theorem \ref{Hilbert-Schmidt}. Then $T_{\sigma}:L^2(\P)\to L^2(\P)$ is a trace class operator and 
$$\operatorname{Tr}(T_{\sigma})=\int_{\P}\alpha(b,a,\v,b,a,\v)\dfrac{dbdad\v}{a^3}.$$ 
\end{Cor}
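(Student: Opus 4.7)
The starting point is the kernel identification obtained in the proof of Theorem \ref{Hilbert-Schmidt}, namely that $T_\sigma$ acts on $L^2(\P)$ as the integral operator with kernel $k(u,v) := \alpha(u)(v)$ relative to the left Haar measure $d\mu_L$, where I write $u=(b,a,\v)$ and $v=(x,c,t)$. Theorem \ref{Hilbert-Schmidt} already places $T_\sigma$ in $S_2(L^2(\P))$, so it only remains to upgrade this to the trace class $S_1(L^2(\P))$ and to read off the trace from the kernel.

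To upgrade, I would factor $T_\sigma = AB$ with $A,B \in S_2(L^2(\P))$. The natural tool is a singular value decomposition of $k$: write $k(u,v) = \sum_n s_n \phi_n(u)\overline{\psi_n(v)}$ with orthonormal systems $\{\phi_n\},\{\psi_n\}$ and $\sum_n s_n^2 < \infty$, and define $A$, $B$ via the kernels obtained by distributing $\sqrt{s_n}$ between the two factors against an auxiliary orthonormal family. Once such a factorization is in place, the standard Hilbert--Schmidt product trace identity yields
\begin{equation*}
\operatorname{Tr}(T_\sigma) = \operatorname{Tr}(AB) = \int_\P\int_\P K_A(u,v)K_B(v,u)\,d\mu_L(u)\,d\mu_L(v) = \int_\P k(u,u)\,d\mu_L(u),
\end{equation*}
where the last step uses the reproducing formula $k(u,u) = \int_\P K_A(u,v)K_B(v,u)\,d\mu_L(v)$ together with Fubini, the latter being legitimised by the $L^1$ hypothesis on the diagonal of $\alpha$. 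Substituting $k(u,u) = \alpha(b,a,\v,b,a,\v)$ then gives the stated formula.

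The main obstacle is to show that $T_\sigma$ is actually trace class, since integrability of the diagonal of a Hilbert--Schmidt kernel does not by itself imply membership in $S_1$. The key leverage is the weak continuity of $(b,a,\v)\mapsto\alpha(b,a,\v)$ inherited from the hypothesis of Theorem \ref{uniqueness1}, which renders the pointwise evaluation $\alpha(u)(u)$ meaningful and, combined with the $L^1$-diagonal assumption, permits a Mercer-type argument (alternatively, an approximation of $\alpha$ by smooth compactly supported kernels followed by passage to the limit using continuity of the trace in the $S_1$-norm) to force the singular values $s_n$ of $T_\sigma$ to be summable, thereby legitimising the factorization step above.
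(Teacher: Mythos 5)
Your identification of the kernel $k(u,v)=\alpha(u)(v)$ from the proof of Theorem \ref{Hilbert-Schmidt}, and your plan to obtain the trace by writing $T_\sigma$ as a product of two Hilbert--Schmidt operators and integrating the kernel over the diagonal, is in the right spirit: the paper itself offers no proof of this corollary beyond calling it an immediate consequence of Theorem \ref{Hilbert-Schmidt}, and in the following theorem it leans on Brislawn's results on traceable integral kernels \cite{brislawn} for exactly this diagonal-integration step. However, your argument has a genuine gap at its central point, and you have in effect flagged it yourself without closing it. The factorization you propose is circular: if $k(u,v)=\sum_n s_n\phi_n(u)\overline{\psi_n(v)}$ is the singular value decomposition and you distribute $\sqrt{s_n}$ between two factors $A$ and $B$, then $\|A\|_{S_2}^2=\|B\|_{S_2}^2=\sum_n s_n$, so $A,B\in S_2$ if and only if $\sum_n s_n<\infty$, i.e.\ if and only if $T_\sigma$ is already trace class --- which is the assertion to be proved. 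The hypothesis $\int_{\P}|\alpha(b,a,\v,b,a,\v)|\,a^{-3}\,db\,da\,d\v<\infty$ cannot by itself force $\sum_n s_n<\infty$ (integrability of the diagonal of a Hilbert--Schmidt kernel never does), so some external input is unavoidable here.

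The fallback you offer does not supply that input. A Mercer-type argument requires the kernel to be continuous \emph{and} positive semi-definite; nothing in the hypotheses gives positivity of $T_\sigma$, and the weak continuity assumed in Theorem \ref{uniqueness1} concerns the map $(b,a,\v,\pi_i^j)\mapsto \pi_i^j(b,a,\v)^{\ast}D_i^j\sigma(b,a,\v,\pi_i^j)$ into $S_2$, which does not yield joint continuity of $(u,v)\mapsto\alpha(u)(v)$. Likewise, the ``reproducing formula'' $k(u,u)=\int_\P K_A(u,v)K_B(v,u)\,d\mu_L(v)$ only holds for almost every $u$ after a specific choice of representatives of the $L^2$-kernels; since the diagonal is a null set in $\P\times\P$, the pointwise value $k(u,u)$ is not determined by $k$ as an $L^2$-function, and this is precisely the subtlety that Brislawn's averaging construction is designed to handle. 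To make your argument rigorous you would need either to invoke \cite{brislawn} directly (as the paper implicitly does), or to strengthen the hypothesis on $\alpha$ --- e.g.\ to an explicit factorization $\alpha(u)(v)=\int_\P k_1(u)(w)k_2(w)(v)\,d\mu_L(w)$ with $k_1,k_2$ square-integrable, which is exactly the form adopted in the trace-class theorem that follows this corollary in the paper.
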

\begin{theorem}[Trace class pseudo-differential operator]
Let $\sigma:\P\times\widehat{\P}\to S_2$ be a symbol satisfying the hypotheses of Theorem \ref{uniqueness}. Then the pseudo-differential operator $T_{\sigma}:L^2(\P)\to L^2(\P)$ is a trace-class operator iff
 $$D_i^j\sigma(b,a,\v,\pi_i^j)\pi_i^j(b,a,\v)^{\ast}=W_{\tau_{\alpha(b,a,\v)}},\quad i,j=1,2,$$ for all $(b,a,\v)\in\P$,
where $\alpha:\P\to L^2(\P)$ is a mapping such that the conditions of Theorem \ref{Hilbert-Schmidt} are satisfied and $$\alpha(b,a,\v)(x,c,t)=\int_{\P}k_1(b,a,\v)(b^{\prime},a^{\prime},\v^{\prime})k_2(b^{\prime},a^{\prime},\v^{\prime})(x,c,t)\dfrac{db^{\prime}da^{\prime}d\v^{\prime}}{a^{\prime 3}},$$	for all $(b,a,\v),(x,c,t)\in\P,$ and $ k_1:\P\to L^2(\P), k_2:\P\to L^2(\P)$ satisfy $$\int_{\P}\|k_i(b,a,\v)\|_{L^2(\P)}^2\dfrac{dbdad\v}{a^3}<\infty ,i=1,2.$$ Furthermore, if $T_{\sigma}:L^2(\P)\to L^2(\P)$ is a trace-class operator, then $$\operatorname{Tr}(T_{\sigma})=\int_{\P}\alpha(b,a,\v)(b,a,\v)\dfrac{dbdad\v}{a^3}.$$ 
\end{theorem}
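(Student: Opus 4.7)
The plan is to reduce everything to the Hilbert--Schmidt characterization of Theorem \ref{Hilbert-Schmidt} by exploiting the classical factorization of trace-class operators on a Hilbert space as products of two Hilbert--Schmidt operators. The key observation is that the integral expression defining $\alpha(b,a,\v)(x,c,t)$ is precisely the kernel of a composition of two integral operators whose kernels are $k_1$ and $k_2$.

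For sufficiency, I would assume the factorization $\alpha(b,a,\v)(x,c,t)=\int_{\P} k_1(b,a,\v)(b',a',\v')\,k_2(b',a',\v')(x,c,t)\,\frac{db'da'd\v'}{a'^{3}}$ with the stated $L^2$ bounds on the $k_i$. By the kernel identification in Theorem \ref{Hilbert-Schmidt}, each $k_i$ is the integral kernel of a Hilbert--Schmidt pseudo-differential operator $T_{\sigma_i}$ on $L^2(\P)$. A direct Fubini computation then identifies the integral kernel of the composition $T_{\sigma_1}T_{\sigma_2}$ with $\alpha(b,a,\v)(x,c,t)$, which matches the kernel that Theorem \ref{Hilbert-Schmidt} attaches to the given $\sigma$. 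Hence $T_{\sigma}=T_{\sigma_1}T_{\sigma_2}$, and being a product of two Hilbert--Schmidt operators it is trace-class.

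For necessity, I would invoke the standard decomposition $T_{\sigma}=AB$ with $A,B$ Hilbert--Schmidt on $L^2(\P)$. Every Hilbert--Schmidt operator on $L^2(\P)$ has an $L^2$ integral kernel, so reading off kernels $k_1,k_2$ with $\int_{\P}\|k_i(b,a,\v)\|_{L^2(\P)}^2\,\frac{dbdad\v}{a^3}<\infty$ and applying Theorem \ref{Hilbert-Schmidt} in the opposite direction produces symbols $\sigma_A,\sigma_B$ such that $T_{\sigma_A}=A$ and $T_{\sigma_B}=B$. Composing the two integral operators yields the stated formula for $\alpha$, and the uniqueness portion of Theorem \ref{uniqueness1} then forces $D_i^j\sigma(b,a,\v,\pi_i^j)\pi_i^j(b,a,\v)^{\ast}=W_{\tau_{\alpha(b,a,\v)}}$.

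The trace formula follows because Theorem \ref{Hilbert-Schmidt} tells us that the integral kernel of $T_{\sigma}$ with respect to $\frac{dbdad\v}{a^3}$ is precisely $\alpha(b,a,\v)(x,c,t)$. For a trace-class operator whose kernel is realized as a composition of two Hilbert--Schmidt kernels, evaluation along the diagonal is well-defined (its absolute integral is controlled by $\|k_1\|_{L^2}\|k_2\|_{L^2}$ via Cauchy--Schwarz), and the trace is obtained by integrating the kernel along the diagonal, giving $\operatorname{Tr}(T_{\sigma})=\int_{\P}\alpha(b,a,\v)(b,a,\v)\,\frac{dbdad\v}{a^3}$. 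The main obstacle I anticipate is the necessity step: one must ensure that the Hilbert--Schmidt factors $A,B$ genuinely arise as pseudo-differential operators via the correspondence of Theorem \ref{Hilbert-Schmidt}. This requires verifying that the map $\alpha\mapsto\sigma$ defined there is surjective onto the Hilbert--Schmidt class, which is an isometric identification once one reads the Hilbert--Schmidt norm as the $L^2(\P\times\P)$ norm of the integral kernel.
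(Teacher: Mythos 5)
Your proposal is correct and follows essentially the same route as the paper, whose own proof is only a two-sentence sketch: one direction is derived from Theorem \ref{Hilbert-Schmidt}, the other from the factorization of a trace-class operator as a product of two Hilbert--Schmidt operators, with the diagonal evaluation of the composed kernel justified by Brislawn's result. You simply fill in the details the paper omits (the kernel-composition computation, the appeal to the uniqueness theorem, and the Cauchy--Schwarz control of the diagonal integral), so there is nothing to correct.
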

\begin{pff}
The one way of the proof follows from Theorem \ref{Hilbert-Schmidt}. The converse part is proved using the fact that every trace class operator is product of two Hilbert-Schmidt operators from \cite{brislawn}.  
\end{pff}

	\section{Fourier-Wigner Transform and Weyl Transform on Affine Poincar\'e Group}\label{s5}
Let $(b,a,\v)\in\P$, then we find  $(b^{\prime},a^{\prime},\v^{\prime})\in\P$ such that $$(b^{\prime},a^{\prime},\v^{\prime})\ast (b^{\prime},a^{\prime},\v^{\prime})=(b,a,\v),$$ which is $$(b^{\prime},a^{\prime},\v^{\prime})=\Big(\dfrac{b+\sqrt{a}\Lambda_{-\v/2}b}{1+a^2+2\sqrt{a}\cosh\v/2},\sqrt{a},\v/2\Big).$$	We denote 
$$(b,a,\v)^{1/2}=\Big(\dfrac{b+\sqrt{a}\Lambda_{-\v/2}b}{1+a^2+2\sqrt{a}\cosh\v/2},\sqrt{a},\v/2\Big).$$ With this construction, we define the Fourier-Wigner transform on $\P$. Let $f,g\in L^2(\P)$. Then the  Fourier-Wigner transform $V(f,g)$ of $f$ and $g$ is on $\widehat{\P}\times\P$ is defined by
$$(V(f,g)(\pi_i^j,\xi)\varphi)(x)=\int_{\P}f(\xi^{1/2}\ast w)\overline{g(w^{-1}\ast\xi^{1/2})}(\pi_i^j(w)D_{i}^j\varphi)(x)d\mu(w),$$
for all $\xi\in\P,\varphi\in L^2(C_i^j)$, where $i,j=1,2$. Let us denote the function $F^{\xi}$ on $\P$ by $$F^{\xi}(w)=f(\xi^{1/2}\ast w)\overline{g(w^{-1}\ast\xi^{1/2})}.$$ Then $$V(f,g)(\pi_i^j,\xi)=(\mathcal{F}_{\P}F^{\xi})(\pi_i^j),\quad i,j=1,2.$$
Now we define another transform, known as Wigner transform on the group $\P$. Let $f,g\in L^2(\P)$. Then the Wigner transform $W(f,g)$ of $f$ and $g$ on ${\P}\times\widehat{\P}$ is defined by	
$$W(f,g)(w,\pi_i^j)=\Big(\mathcal{F}_{\P,2}\mathcal{F}_{\P,1}^{-1}V(f,g)\Big)(w,\pi_i^j),$$ where $\mathcal{F}_{\P,2}$ is the Fourier transform of $V(f,g)$ in second variable and $\mathcal{F}_{\P,1}^{-1}$ is the inverse Fourier transform of $V(f,g)$ in first variable. After some simple calculation we obtain
\begin{equation}\label{Wigner}
	(W(f,g)(w,\pi_i^j)\varphi)(x)=\int_{\P}f(\xi^{1/2}\ast w)\overline{g(w^{-1}\ast\xi^{1/2})}\big(\pi_i^j(\xi)D_{i}^j\varphi\big)(x)d\mu(\xi),
\end{equation}
for all $w\in\P,\varphi\in L^2(C_i^j)$, where $i,j=1,2$.\\
Let us define $L^2(\widehat{\P}\times\P,S_2)$ be the space of all operator valued functions $F$ defined on $\widehat{\P}\times\P$ to $S_2$ such that $$\|F\|^2_{L^2(\widehat{\P}\times\P,S_2)}\coloneqq\int_{\P}\|F(\pi_i^j,b,a,\v)\|_{S_2(C_i^j)}^2d\mu(b,a,\v)<\infty,\quad i,j=1,2,$$ 
 and the inner product is defined for every $F$ and $G$ in $L^2(\widehat{\P}\times\P,S_2)$ by 
$$ \langle F,G\rangle_{L^2(\widehat{\P}\times\P,S_2)}\coloneqq\int_{\P}\sum_{i,j=1}^2\text{Tr}\Big[F(\pi_i^j,b,a,\v)G(\pi_i^j,b,a,\v)^{\ast}\Big]d\mu(b,a,\v).$$
Next result is related to the orthogonality relation of Fourier-Wigner transform.
\begin{theorem}\label{Fourier-Wigner}
Let $f,g,h$, and $k$ be in $L^2(\P)$. Then
$$\langle V(f,h),V(g,k)\rangle_{L^2(\widehat{\P}\times\P,S_2)}=\langle f,g\rangle_{L^2(\P)}\overline{\langle h,k\rangle_{L^2(\P)}}.$$	
\end{theorem}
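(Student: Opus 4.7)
The plan is to reduce the stated orthogonality relation to the Plancherel identity of Theorem \ref{plancherel} followed by two left-Haar changes of variable. The very definition of the Fourier--Wigner transform identifies
$$V(f,h)(\pi_i^j,\xi) = (\F_{\P} F^{\xi}_{f,h})(\pi_i^j),\qquad F^{\xi}_{f,h}(w):= f(\xi^{1/2}\ast w)\,\overline{h(w^{-1}\ast\xi^{1/2})},$$
and analogously $V(g,k)(\pi_i^j,\xi) = (\F_{\P} F^{\xi}_{g,k})(\pi_i^j)$. Polarizing the Plancherel identity \eqref{Plancherel} (proved as part of Theorem \ref{plancherel}) yields, for each fixed $\xi$,
$$\sum_{i,j=1}^{2}\operatorname{Tr}\big[V(f,h)(\pi_i^j,\xi)V(g,k)(\pi_i^j,\xi)^{\ast}\big] = \langle F^{\xi}_{f,h},F^{\xi}_{g,k}\rangle_{L^{2}(\P)}.$$
Integrating this over $\xi\in\P$ against $d\mu$ rewrites the left-hand side of the claim as
$$I:=\int_{\P}\!\int_{\P} f(\xi^{1/2}\ast w)\overline{g(\xi^{1/2}\ast w)}\,\overline{h(w^{-1}\ast\xi^{1/2})}\,k(w^{-1}\ast\xi^{1/2})\,d\mu(w)\,d\mu(\xi).$$

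The next step is a pair of substitutions that use only the left-invariance of $d\mu$. Fixing $\xi$ and setting $s=\xi^{1/2}\ast w$, so that $w=(\xi^{1/2})^{-1}\ast s$, left-invariance yields $d\mu(w)=d\mu(s)$; combined with the defining relation $\xi^{1/2}\ast\xi^{1/2}=\xi$ recorded at the start of Section \ref{s5}, this collapses $w^{-1}\ast\xi^{1/2}$ to $s^{-1}\ast\xi^{1/2}\ast\xi^{1/2}=s^{-1}\ast\xi$. Hence
$$I=\int_{\P}\!\int_{\P} f(s)\overline{g(s)}\,\overline{h(s^{-1}\ast\xi)}\,k(s^{-1}\ast\xi)\,d\mu(s)\,d\mu(\xi).$$
Applying Fubini and then the substitution $t=s^{-1}\ast\xi$ in the inner $\xi$-integral — once more using left-invariance to obtain $d\mu(\xi)=d\mu(t)$ — separates the variables and produces
$$\left(\int_{\P}f(s)\overline{g(s)}\,d\mu(s)\right)\!\left(\int_{\P}\overline{h(t)}k(t)\,d\mu(t)\right)=\langle f,g\rangle_{L^{2}(\P)}\,\overline{\langle h,k\rangle_{L^{2}(\P)}},$$
which is exactly the identity to be proved.

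The two points requiring care are technical rather than structural. First, $f,h\in L^{2}(\P)$ does not a priori force $F^{\xi}_{f,h}\in L^{2}(\P)$ for every $\xi$, so strictly speaking one proves the identity first for $f,g,h,k$ in the dense subspace $L^{1}(\P)\cap L^{2}(\P)\cap L^{\infty}(\P)$ and then extends by sesquilinearity and continuity in each argument. Second, the polarization of Plancherel inside the inner-product structure of $L^{2}(\widehat{\P}\times\P,S_{2})$ must be justified, but this is routine once \eqref{Plancherel} is in hand. The main obstacle I anticipate is pure bookkeeping with the non-abelian group law: keeping the order of multiplications straight when simplifying $w^{-1}\ast\xi^{1/2}$ after the first substitution, and making sure the identity $\xi^{1/2}\ast\xi^{1/2}=\xi$ is invoked at precisely the right place. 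The non-unimodularity of $\P$ plays no role here, because every substitution performed is a left translation and thus preserves $d\mu$ with trivial Jacobian, and the Duflo--Moore operators enter the argument only through the already-established Plancherel identity.
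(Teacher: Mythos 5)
Your proposal is correct and follows essentially the same route as the paper's own proof: identify $V(f,h)(\pi_i^j,\xi)$ with the group Fourier transform of $F^{\xi}$, apply the (polarized) Plancherel identity for each fixed $\xi$, and then perform the left-invariant substitution $w\mapsto\xi^{1/2}\ast w$ so that $w^{-1}\ast\xi^{1/2}$ collapses to $w'^{-1}\ast\xi$ and the integrals separate. Your additional remarks on the density argument and the explicit second substitution $t=s^{-1}\ast\xi$ only make explicit steps the paper leaves implicit.
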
 
\begin{pff}
 Let $$F^{\xi}(w)=f(\xi^{1/2}\ast w)\overline{h(w^{-1}\ast\xi^{1/2})}, G^{\xi}(w)=g(\xi^{1/2}\ast w)\overline{k(w^{-1}\ast\xi^{1/2})}.$$   Then $$V(f,h)(\pi_i^j,\xi)=(\mathcal{F}_{\P}F^{\xi})(\pi_i^j),V(g,k)(\pi_i^j,\xi)=(\mathcal{F}_{\P}G^{\xi})(\pi_i^j)\quad i,j=1,2.$$ Now using the Plancherel formula for group Fourier transform, we get
\begin{align}\label{eqFW}
&\langle V(f,h),V(g,k)\rangle_{L^2(\widehat{\P}\times\P,S_2)}\\
&=\int_{\P}\sum_{i,j=1}^2\text{Tr}\Big[V(f,h)(\pi_i^j,b,a,\v)V(g,k)(\pi_i^j,b,a,\v)^{\ast}\Big]d\mu(b,a,\v)\nonumber\\
&=\int_{\P}\sum_{i,j=1}^2\text{Tr}\Big[\F_{\P}F^{\xi}(\pi_i^j)\F_{\P}G^{\xi}(\pi_i^j)^{\ast}\Big]d\mu(b,a,\v)\nonumber\\
&= \int_{\P}\langle (\mathcal{F}_{\P}F^{\xi}),(\mathcal{F}_{\P}G^{\xi})\rangle_{L^2(\widehat{\P},S_2)}d\mu(b,a,\v)\nonumber \\
&=\int_{\P}\langle F^{\xi},G^{\xi}\rangle_{L^2(\P)}	d\mu(b,a,\v)\nonumber\\
&=\int_{\P}\int_{\P}f(\xi^{\frac{1}{2}}\ast w)\overline{h(w^{-1}\ast\xi^{\frac{1}{2}})}\overline{g(\xi^{\frac{1}{2}}\ast w)}{k(w^{-1}\ast\xi^{\frac{1}{2}})}d\mu(\xi)d\mu(w).
\end{align}
We make a change of variable from $w=(x,c,t)$ to $w^{\prime}=(x^{\prime},c^{\prime},t^{\prime}),$ by $w^{\prime}=\xi^{\frac{1}{2}}\ast w$. Then using the properties of left-invariant Haar measure, the equation \eqref{eqFW} becomes
\begin{align*}
	\langle V(f,h),V(g,k)\rangle_{L^2(\widehat{\P}\times\P,S_2)}&=\int_{\P}\int_{\P}f(w^{\prime})\overline{h(w^{\prime -1}\ast\xi)}\overline{g(w^{\prime})}{k(w^{\prime -1}\ast\xi)}d\mu(\xi)d\mu(w^{\prime})\\
	&=\langle f,g\rangle_{L^2(\P)}\langle k,h\rangle_{L^2(\P)}.
\end{align*}
Hence this complete the proof. 	
\end{pff}

Similarly, let us define $L^2(\P\times\widehat{\P},S_2)$ be the space of all operator valued functions $F$ defined on $\P\times\widehat{\P}$ to $S_2$ such that $$\|F\|^2_{L^2(\P\times\widehat{\P},S_2(C_i^j))}\coloneqq\int_{\P}\|F(b,a,\v,\pi_i^j)\|_{S_2(C_i^j)}^2d\mu(b,a,\v)<\infty,\quad i,j=1,2,$$ 
and the inner product is defined for every $F$ and $G$ in $L^2(\widehat{\P}\times\P)$ by 
$$ \langle F,G\rangle_{L^2(\P\times\widehat{\P})}\coloneqq\int_{\P}\sum_{i,j=1}^2\text{Tr}\Big[F(b,a,\v,\pi_i^j)G(b,a,\v,\pi_i^j)^{\ast}\Big]d\mu(b,a,\v).$$
Next we prove the orthogonality relation of Wigner transform.
\begin{theorem}
Let $f,g,h$, and $k$ be in $L^2(\P)$. Then
\begin{equation}\label{Wignerorthogonal}
	\langle W(f,h),W(g,k)\rangle_{L^2(\P\times\widehat{\P},S_2)}=\langle f,g\rangle_{L^2(\P)}\overline{\langle h,k\rangle_{L^2(\P)}}.
\end{equation}	
\end{theorem}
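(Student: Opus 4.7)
The strategy is to reduce the claim to the Fourier--Wigner orthogonality relation (Theorem \ref{Fourier-Wigner}) by exploiting that the Wigner transform differs from $V(f,g)$ only by partial group Fourier transforms on $\P$, each of which is unitary on the appropriate $S_2$-valued $L^2$ space.

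First I would re-interpret $W(f,g)$ via the formula \eqref{Wigner}: comparing with the definition of the group Fourier transform on $\P$, one sees that
$$W(f,g)(w,\pi_i^j) = \widehat{G_w^{f,g}}(\pi_i^j), \qquad G_w^{f,g}(\xi) := f(\xi^{1/2}\ast w)\,\overline{g(w^{-1}\ast\xi^{1/2})},$$
so that for each fixed $w \in \P$ the operator-valued map $\pi_i^j \mapsto W(f,g)(w,\pi_i^j)$ is precisely the group Fourier transform of $G_w^{f,g} \in L^2(\P)$ evaluated at $\pi_i^j$.

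Next I would apply the Plancherel identity of Theorem \ref{plancherel} in the variable $\xi$ with $w$ fixed, obtaining, for every $w \in \P$,
$$\sum_{i,j=1}^2 \operatorname{Tr}\Big[W(f,h)(w,\pi_i^j)\, W(g,k)(w,\pi_i^j)^{\ast}\Big] = \int_\P G_w^{f,h}(\xi)\,\overline{G_w^{g,k}(\xi)}\,d\mu(\xi).$$
Integrating in $w$ over $\P$ and invoking Fubini then yields
$$\langle W(f,h),W(g,k)\rangle_{L^2(\P\times\widehat{\P},S_2)} = \int_\P\int_\P F^\xi_{f,h}(w)\,\overline{F^\xi_{g,k}(w)}\,d\mu(w)\,d\mu(\xi),$$
where $F^\xi_{f,h}(w) = G_w^{f,h}(\xi)$. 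This iterated integral is exactly the one that appears in \eqref{eqFW} during the proof of Theorem \ref{Fourier-Wigner}, so in effect I will have shown $\langle W(f,h),W(g,k)\rangle = \langle V(f,h),V(g,k)\rangle$.

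Finally, I would conclude by the same left-invariant change of variables $w \mapsto \xi^{1/2}\ast w$ used at the end of the proof of Theorem \ref{Fourier-Wigner}; this decouples the two integrations and delivers $\langle f,g\rangle_{L^2(\P)}\,\overline{\langle h,k\rangle_{L^2(\P)}}$. The main technical obstacle will be the justification of Fubini and of the vector-valued Plancherel step: one must verify that the $S_2$-valued integrands are jointly integrable (which follows from Cauchy--Schwarz together with the scalar Plancherel identity) and that the partial Fourier transform acts isometrically on $S_2$-valued $L^2$-functions, which reduces to Theorem \ref{plancherel} after expanding in an orthonormal basis of $S_2$.
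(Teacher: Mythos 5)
Your proposal is correct and follows essentially the same route as the paper, whose entire proof is the one-line remark that the result ``follows directly from Theorem \ref{Fourier-Wigner}''; you simply make that reduction explicit by observing from \eqref{Wigner} that $W(f,g)(w,\pi_i^j)$ is the group Fourier transform of $\xi\mapsto f(\xi^{1/2}\ast w)\overline{g(w^{-1}\ast\xi^{1/2})}$, applying the Plancherel theorem for fixed $w$, and landing on the same iterated integral as in \eqref{eqFW}. The extra care you take with Fubini and the vector-valued Plancherel step is a welcome addition that the paper leaves implicit.
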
 
\begin{pff}
The proof follows directly from Theorem \ref{Fourier-Wigner}.	
\end{pff}	
Let $\sigma:\P\times\widehat{\P}\to B(L^2(C_i^j))$ be an operator valued symbol. Then the Weyl transform $W_{\sigma}$ associated to an operator valued symbol $\sigma$ is defined by
\begin{equation}\label{Weyl}
	\langle W_{\sigma}f,g\rangle_{L^2(\P)}=\int_{\P}\sum_{i,j=1}^2\text{Tr}\Big[D_i^j\sigma(b,a,\v,\pi_i^j)W(f,g)(b,a,\v,\pi_i^j)\Big]\dfrac{dbdad\v}{a^3},
\end{equation}	
for all $f,g\in L^2(\P)$, where $W(f,g)$ is the Wigner transform of $f$ and $g$ defined in \eqref{Wigner}.	
We prove the Weyl transform defined in \eqref{Weyl} is bounded on $L^2(\P)$.
\begin{theorem}
	Let $\sigma:\P\times\widehat{\P}\to S_2$ be an operator-valued symbol such that the mappings
	$$\P\times\widehat{\P}\owns (b,a,\v,\pi_i^j)\mapsto D_i^j\sigma(b,a,\v,\pi_i^j)\in S_2(C_i^j)$$ are satisfying 
	\begin{equation}\label{Wbounded}
		\int_{\P}\|D_i^j\sigma(b,a,\v,\pi_i^j)\|_{S_2(C_i^j)}^2\dfrac{dbdad\v}{a^3}<\infty,i,j=1,2.
	\end{equation} Then $W_{\sigma}:L^2(\P)\to L^2(\P)$ is a bounded operator.
\end{theorem}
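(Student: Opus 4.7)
The plan is to prove boundedness via duality: I will show that $|\langle W_{\sigma}f,g\rangle_{L^2(\P)}|$ is controlled by $\|f\|_{L^2(\P)}\,\|g\|_{L^2(\P)}$ up to a constant depending only on $\sigma$, and then conclude by taking the supremum over unit vectors $g\in L^2(\P)$.

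First, starting from the defining identity \eqref{Weyl}, I would move the absolute value inside the sum and the integral, and apply the standard Hilbert--Schmidt trace inequality $|\operatorname{Tr}(AB)|\leq \|A\|_{S_2}\|B\|_{S_2}$ to each summand, obtaining
\[
|\langle W_{\sigma}f,g\rangle_{L^2(\P)}|\leq \int_{\P}\sum_{i,j=1}^{2}\|D_i^j\sigma(b,a,\v,\pi_i^j)\|_{S_2(C_i^j)}\,\|W(f,g)(b,a,\v,\pi_i^j)\|_{S_2(C_i^j)}\,\frac{db\,da\,d\v}{a^3}.
\]
Then I would apply the Cauchy--Schwarz inequality on $L^2(\P,d\mu)$ in each $(i,j)$-term separately, which produces the product $\|D_i^j\sigma\|_{L^2(\P,S_2)}\,\|W(f,g)(\cdot,\pi_i^j)\|_{L^2(\P,S_2(C_i^j))}$.

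The key step is to control the Wigner factor by $\|f\|\,\|g\|$. For this I invoke the orthogonality relation for the Wigner transform \eqref{Wignerorthogonal} with the choice $h=g$ and $k=g$, which yields
\[
\sum_{i,j=1}^{2}\int_{\P}\|W(f,g)(b,a,\v,\pi_i^j)\|_{S_2(C_i^j)}^{2}\,d\mu(b,a,\v)=\|W(f,g)\|_{L^2(\P\times\widehat{\P},S_2)}^{2}=\|f\|_{L^2(\P)}^{2}\,\|g\|_{L^2(\P)}^{2}.
\]
In particular each individual summand $\|W(f,g)(\cdot,\pi_i^j)\|_{L^2(\P,S_2(C_i^j))}$ is bounded by $\|f\|_{L^2(\P)}\|g\|_{L^2(\P)}$. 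Combining this with the assumption \eqref{Wbounded}, which guarantees that each $\|D_i^j\sigma\|_{L^2(\P,S_2(C_i^j))}$ is finite, I obtain
\[
|\langle W_{\sigma}f,g\rangle_{L^2(\P)}|\leq \|f\|_{L^2(\P)}\,\|g\|_{L^2(\P)}\sum_{i,j=1}^{2}\Big(\int_{\P}\|D_i^j\sigma(b,a,\v,\pi_i^j)\|_{S_2(C_i^j)}^{2}\,\frac{db\,da\,d\v}{a^3}\Big)^{1/2}.
\]

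Finally, taking the supremum over $g\in L^2(\P)$ with $\|g\|_{L^2(\P)}=1$ gives the desired estimate
\[
\|W_{\sigma}f\|_{L^2(\P)}\leq \Big(\sum_{i,j=1}^{2}\|D_i^j\sigma\|_{L^2(\P,S_2(C_i^j))}\Big)\|f\|_{L^2(\P)},
\]
which establishes boundedness. No genuine obstacle is expected; the only thing to be careful about is the matching of $S_2$-norms with the Hilbert space $L^2(\P\times\widehat{\P},S_2)$ so that the orthogonality formula applies directly, and the interchange of sum, integral, and absolute value justified by the finite summation in $(i,j)$.
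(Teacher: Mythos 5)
Your proposal is correct and follows essentially the same route as the paper: bound the sesquilinear form via the Hilbert--Schmidt trace inequality, apply Cauchy--Schwarz in the $(b,a,\v)$-integral, and control the Wigner factor by the orthogonality relation \eqref{Wignerorthogonal}, then conclude by duality. The only cosmetic difference is that you apply Cauchy--Schwarz to each $(i,j)$-term separately (yielding the constant $\sum_{i,j}\|D_i^j\sigma\|_{L^2(\P,S_2(C_i^j))}$) whereas the paper applies it jointly over the sum and integral (yielding $\bigl(\sum_{i,j}\|D_i^j\sigma\|^2_{L^2(\P,S_2(C_i^j))}\bigr)^{1/2}$); both are finite under \eqref{Wbounded}.
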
	
\begin{pff}
Let $f,g$ be in $L^2(\P)$. Then by the definition of Weyl transform \eqref{Weyl}, Schwartz inequality, and equations \eqref{Wbounded} and \eqref{Wignerorthogonal}, we get	
\begin{align*}
	|\langle W_{\sigma}f,g\rangle_{L^2(\P)}| &\leq\int_{\P}\sum_{i,j=1}^2\Bigl|\text{Tr}\Big[D_i^j\sigma(b,a,\v,\pi_i^j)W(f,g)(b,a,\v,\pi_i^j)\Big]\Bigr|d\mu(b,a,\v)\\
	&\leq \int_{\P}\sum_{i,j=1}^2\|D_i^j\sigma(b,a,\v,\pi_i^j)\|_{S_2(C_i^j)}\|W(f,g)(b,a,\v,\pi_i^j)\|_{S_2(C_i^j)}d\mu(b,a,\v)\\
	&\leq \Bigg(\int_{\P}\sum_{i,j=1}^2\|D_i^j\sigma(b,a,\v,\pi_i^j)\|_{S_2(C_i^j)}^2d\mu(b,a,\v)\Bigg)^{\frac{1}{2}}\\
	&\times\Bigg(\int_{\P}\sum_{i,j=1}^2 |W(f,g)(b,a,\v,\pi_i^j)\|_{S_2(C_i^j)}d\mu(b,a,\v)\Bigg)^{\frac{1}{2}}\\
	&= \|D\sigma\|_{L^2(\P\times\widehat{\P},S_2)}\times \|W(f,g)\|_{L^2(\P\times\widehat{\P},S_2)}\\
	&=\|D\sigma\|_{L^2(\P\times\widehat{\P},S_2)}\|f\|_{L^2(\P)}\|g\|_{L^2(\P)},
\end{align*}
where $$\|D\sigma\|_{L^2(\P\times\widehat{\P},S_2)}^2=\int_{\P}\sum_{i,j=1}^2\|D_i^j\sigma(b,a,\v,\pi_i^j)\|_{S_2(C_i^j)}^2d\mu(b,a,\v).$$
\end{pff}


\begin{thebibliography}{99}
		\bibitem{interpolation} Bennett, C. and Sharpley R. C.  {Interpolation of operators}. Academic press(1988).
		
		\bibitem{brislawn} Brislawn, C. {Traceable integral kernels on countably generated measure spaces}. {Pacific Journal of Mathematics}, {150}(2), 229-240(1991).
		
	\bibitem{Car1}	Cardona D, Kumar V. Multilinear analysis for discrete and periodic pseudo-differential operators in $L^p$ spaces. Rev Integr Temas Mat. 2018;36(2):151–164.
	
	\bibitem{Car2} Cardona D, Kumar V. Lp-boundedness and $L^p$-nuclearity of multilinear pseudo-differential operators on $\Z^n$ and the torus $\mathbb{T}^n$. J Fourier Anal Appl. 2019;25:2973–3017. 
	
	\bibitem{Car3} Cardona D, Kumar V. The nuclear trace of periodic vector-valued pseudo-differential operators with applications to index theory. Math Nachr. 2021;294:1657–1683.
		
		\bibitem{San1}Dasgupta, A. and Nayak, S. K. Pseudo-differential operators, Wigner transform and Weyl transform on the Similitude group, SIM (2). {Bulletin des Sciences Math{\'e}matiques}, {174}, (2022).
		
		\bibitem{San} Dasgupta, A. and Nayak, S. K.  Hilbert--Schmidt and Trace Class Pseudo-Differential Operators and Weyl Transforms on the Affine Group. {Journal of Pseudo-Differential Operators and Applications}, {12}(1), 1-19(2021).
		
		\bibitem{Daku}Dasgupta A, Kumar V. Hilbert–Schmidt and trace class pseudo-differential operators on the abstract heisenberg group. J Math Anal Appl. 2020;486(12):14.
	
		\bibitem{dasgupta} Dasgupta, A. {Pseudo-Differential Operators on the Affine Group.} {Pseudo-Differential Operators: Groups, Geometry and Applications}, 1-14(2017).
	
		\bibitem{dasgupta1} Dasgupta, A. and Wong, M. W. Hilbert-Schmidt and Trace Class Pseudo-Differential Operators on the Heisenberg Group. {Journal of Pseudo-Differential Operators and Applications}, {4}(3), 345-359(2013).
	
		
		
		\bibitem{Del1} Delgado J, Ruzhansky M. $L^p$-bounds for pseudo-differential operators on compact lie groups. J Inst Math Jussieu. 2019;18(3):531–559.
		
		\bibitem{Del2} Delgado J, Ruzhansky M. Lp-nuclearity, traces, and grothendieck-Lidskii formula on compact lie groups. J Math Pures Appl (9). 2014;102(1):153–172.
	
	\bibitem{Del3} Delgado J, Ruzhansky M, Tokmagambetov N. Schatten classes, nuclearity and nonharmonic analysis on compact manifolds with boundary. J Math Pures Appl (9). 2017;107(6):758–783.
		
		\bibitem{Del4} Delgado J, Ruzhansky M. Schatten classes and traces on compact groups. Math Res Lett. 2017;24(4):979–1003. 
	
		\bibitem{duflo} Duflo, M. and Moore, C. C. On the regular representation of a nonunimodular locally compact group. {Journal of Functional Analysis}, {21}(2), 209–243(1976).

	\bibitem{Fis1} Fischer V, Ruzhansky M. Quantization on nilpotent lie groups, progress in mathematics. Vol. 314. Basel: Birkh\"{a}user; 2016. 
	
		\bibitem{Hor} H\"ormander, L. Pseudo-differential operators.  {Communications on Pure and Applied mathematics}, {18}(3), 501-517(1965).
	
		\bibitem{jean} Jean-Pierre, A. Romain, M. Pierre, V. and Twareque, A. S. {Two-dimensional Wavelets and Their Relatives}. Cambridge University Press(2008).
	
		\bibitem{Kohn} Kohn, J. J. and Nirenberg, L. An algebra of pseudo-differential operators. {Communications on Pure and Applied mathematics}, {18}(1), 269-305(1965).

\bibitem{Ku1} Kumar V and Mondal SS., $L^2-L^p$ estimates and Hilbert–Schmidt pseudo differential operators on the Heisenberg motion group, Applicable Analysis, DOI: 10.1080/00036811.2022.2078717.

	\bibitem{Ku2} Kumar V, Mondal SS., Schatten class and nuclear pseudo-differential operators on homogeneous spaces of compact groups. Monatsh Math. 2022;197:149–176.

	\bibitem{Ku3} Kumar, Vishvesh and Mondal, Shyam Swarup, Trace class and Hilbert-Schmidt pseudo differential operators on step two nilpotent Lie groups, Bulletein des Sciences Mathematiques, vol. 171, 2021.
	\bibitem{Ku4} Kumar V. Pseudo-differential operators on homogeneous spaces of compact and hausdorff groups. Forum Math. 2019;31(2):275–282.
	
	\bibitem{mantoiu1} Măntoiu, M. and Ruzhansky, M. Pseudo-differential operators, Wigner transform and Weyl systems on type I locally compact groups. {Documenta Mathematica,} {22}, 1539-1592(2017).
	
		\bibitem{mantoiu2} Măntoiu, M. and Sandoval, M. Pseudo-differential operators associated to general type I locally compact groups. {Analysis and Partial Differential Equations: Perspectives from Developing Countries}, 172–190(2019).
		
		\bibitem{Ruzha} Ruzhansky, M. and Turunen, V. Global quantization of Pseudo-differential operators on Compact Lie group, $SU(2)$, $3$-Sphere, Homogeneous groups. {International Mathematics Research Notices}, {2013}(11), 2439–2496(2013).
	
		\bibitem{Shahla} Shahla, M. {Pseudo-differential operators, Wigner transforms and Weyl transforms on the Poincar{\'e} unit disk.} {Complex Analysis and Operator Theory}, {12}(3), 811-833(2018).
		
		\bibitem{wong1} Wong, M. W. {The Weyl Transform}. Springer(1998).
		
		\bibitem{wong2} Wong, M. W. {An Introduction to Pseudo-Differential Operators}. World Scientific(2014).
        
         \bibitem{Yin} Yin, M. and He, J. $H-$type group, $\alpha-$Weyl transform and pseudo-differential operators. {International Journal Mathematical Analysis}, {9}(25) 1201–1214(2015).
         
		\bibitem{Mol1} S. Molahajloo, Pseudo-differential Operators, Wigner Transforms and Weyl Transforms on the Poincaré Unit Disk, Complex Anal. Oper. Theory (2018) 12:811–833.
		
		\bibitem{Wong} Wong M. W.,  {An Introduction to Pseudo-Differential Operators}, World Scientific Publishing Co. Pte. Ltd.(2014). 
		\bibitem{Wo4} Wong M. W.,  Weyl Transforms, Springer, New York (1998).
	\end{thebibliography}
\end{document}